\newcommand{\C}{\mathbb{C}}
\newcommand{\R}{\mathbb{R}}
\newcommand{\ra}{\rightarrow}
\newcommand{\SU}{\textup{SU}}
 \DeclareMathOperator{\Reom}{Re\,\Omega}
\DeclareMathOperator{\Imom}{Im\,\Omega}
\DeclareMathOperator{\Bsev}{\mathbb{B}_7}
\DeclareMathOperator{\Csev}{\mathbb{C}_7}
\DeclareMathOperator{\Dsev}{\mathbb{D}_7}
\theoremstyle{definition}
\newtheorem{definition}{Definition}[section]
\theoremstyle{plain}
\newtheorem{theorem}{Theorem}[section]
\newtheorem{lemma}[theorem]{Lemma}
\newtheorem{prop}[theorem]{Proposition}
\newtheorem{thmx}{Theorem}
\newtheorem{remark}[theorem]{Remark}
\theoremstyle{plain}
\author{Jakob Stein}
\address{University College London, University of Bath}
\email{jakob.stein.19@ucl.ac.uk}
\author{Matt Turner}
\email{mt916@bath.ac.uk}
\date{}
\title{$G_2$-INSTANTONS ON THE SPINOR BUNDLE OF THE 3-SPHERE.}
\begin{document}
\maketitle
\begin{abstract}
We classify $G_2$-instantons admitting $\SU(2)^3$-symmetries, and construct a new family of examples on the spinor bundle of the 3-sphere, equipped with the asymptotically conical, co-homogeneity one $G_2$-metric of Bryant-Salamon. We also show that outside of the $\SU(2)^3$-invariant examples, any other $G_2$-instanton on this metric with the same asymptotic behaviour must have obstructed deformations.
\end{abstract}

\section{Introduction}
Let $\left(M^7,\varphi \right)$ be a $G_2$-manifold, equipped with a principal $G$-bundle $P \rightarrow M$ for a compact, semi-simple Lie group $G$, where $\varphi \in \Omega^3 (M)$ is a torsion-free $G_2$-structure on $M$. A connection $A$ on $P$ is called a $G_2$-\textit{instanton} if it satisfies the $G_2$-\textit{instanton equations}, namely 
\begin{align} \label{eq:g2instantons00}
F_A \wedge * \varphi = 0 
\end{align}
where $*$ is the Hodge star of the Riemannian metric defined by $\varphi$, and $F_A \in \Omega^2 \left(M;\mathrm{ad} P \right)$ is the curvature of $A$.

These equations first appeared in \cite{corrigan:gaugefeilds,ward:instantons}, and generalise  \textit{anti-self-dual} (ASD) instantons found in dimension $4$: solutions to a first-order system of partial differential equations which minimise the Yang-Mills energy functional. In the conjectural picture outlined by Donaldson-Thomas in \cite{donaldson1996gauge} and later expanded upon in \cite{donaldson2009gauge} and \cite{walpuski:g2instantons}, the moduli-space of solutions to \eqref{eq:g2instantons00} could potentially be used to construct invariants for $G_2$-manifolds, analogous to the anti-self-dual invariants constructed in dimension $4$. However, due to the analytic difficulties involved, as explained in \cite{tian2000gauge}, there is a need for a more complete understanding of the behaviour of these solutions.

In this note, we will exploit symmetries of both the bundle data and the underlying Riemannian manifold, in order to construct new examples of solutions of \eqref{eq:g2instantons00} on the spinor bundle of the 3-sphere, equipped with the metric of Bryant-Salamon \cite{bryantsalamon}. Restricting to this setting and structure group $\SU(2)$, we will be able to give explicit descriptions of the moduli space of these symmetric solutions.  

\subsection*{Families of $G_2$-metrics}

Since metrics with holonomy contained in $G_2$ are Ricci-flat, then besides the flat metrics, the maximal symmetries we could hope to exploit for a non-compact manifold are \textit{co-homogeneity one}, i.e. there is a Lie group of isometries acting on the Riemannian manifold with generic orbits of co-dimension one. There is now an infinite collection of one-parameter families of complete co-homogeneity one $G_2$-metrics, recently constructed by Foscolo-Haskins-Nordstr\"om in \cite{Foscolo2018}. Each one-parameter family is parameterised by some $\alpha$ in the interval $(0,1]$ and a generic member of each family has asympotically locally conical (ALC) geometry, i.e. it is asymptotic to a metric on a circle bundle over a 6-dimensional Calabi-Yau cone. At either end of the parameter space, the geometry transitions: as $\alpha\to0$, the $G_2$-structure collapses to a Calabi-Yau structure on the 6-dimensional cone, while at $\alpha=1$, the metric is asymptotically conical (AC). 

The only family of this form to predate \cite{Foscolo2018} is referred to as the $\Bsev$ family in the physics literature  \cite{gibbonspope:g2mtheory}. It was predicted to exist in 2001 by Brandhuber–Gomis–Gubser–Gukov \cite{bggg} and constructed in 2013 by Bogoyavlenskaya \cite{bogoyavg2}. Foscolo-Haskins-Nordstr\"om recover the $\Bsev$ family, construct the $\Csev$ and $\Dsev$ families predicted to exist in \cite{gibbonspope:g2mtheory}, and construct infinitely many more families that can be viewed as variations of the $\Csev$ family.

The families constructed in \cite{Foscolo2018} can be viewed as desingularisations of conically singular ALC metrics, by removing a neighbourhood of the singularity and gluing in a rescaled AC manifold. This method adapts earlier arguments of Karigiannis in \cite{karigiannis:desingularizeg2}, and the choice of desingularisation yields different families of ALC metrics. However, three of these families, $\Bsev$  and two variants of $\Dsev$, share the same limiting complete AC $G_2$ metric: the metric on the spinor bundle $\mathbf{S}(S^3)$ constructed by Bryant-Salamon in \cite{bryantsalamon}, which has a co-homogeneity one action of $\SU(2)^3$. In the former case, as $\alpha\to0$, the metric collapses to the Stenzel metric on the smoothing of the conifold, while each of the $\Dsev$ families collapse to a different small resolution of the conifold.  

\tikzset{
  mynode/.style={fill,circle,inner sep=2pt,outer sep=0pt}
}

\begin{figure}[h]
\centering
\begin{tikzpicture}
\draw[black,thick,->] (0,0) -- (10,0)
    node[pos=0,mynode,fill=black,label=above:\textcolor{violet}{Collapsed to Stenzel}]{}
    node[pos=0.6,mynode,fill=black,text=blue,label=above:\textcolor{purple}{AC}, label=below:{BS metric}]{}
    node[pos=0.3,label=above:\textcolor{teal}{ALC}]{}
    node[pos=1,label=below:\textcolor{black}{$\alpha$}]{}
    node[pos=0.8,label=above:\textcolor{black}{Incomplete}]{};
\end{tikzpicture}
\caption{Bryant-Salamon manifold in the $\Bsev$ family}\label{fig:BS}
\end{figure}

The Bryant-Salamon metric on $\mathbf{S}(S^3)$ is asymptotic to the cone over the 6-manifold $N=S^3\times S^3$. $N$ has symmetry group $\SU(2)^3\times S_3$ where $S_3$ is the group of permutations on 3 elements. The three variants of the Bryant-Salamon metric at the limit of the $\Bsev$ and $\Dsev$ families are diffeomorphic but not equivariantly diffeomorphic with respect to the cohomogeneity one group action. The even elements of $S_3$ yield these three realisations of the metric while the transpositions are orientation reversing isometries on each metric. This symmetrical picture is described in more detail by Atiyah-Witten in \cite{AW01}, and will be apparent in \S \ref{section:g2instantonodes}.

\subsection*{$G_2$ gauge theory}

We consider $G_2$-instantons on $\mathbf{S} (S^3)$ with its AC Bryant-Salamon $G_2$-metric to answer a question posed in \cite{goncalog2}, by constructing a new one-parameter family of $G_2$-instantons and classifying $\SU(2)^3$-invariant $G_2$-instantons satisfying a natural curvature decay condition. Moreover, in \S \ref{section:deform}, using the deformation theory of instantons on AC $G_2$-metrics developed in \cite{driscollthesis}, we show that the symmetric solutions from \S \ref{section:g2instantonodes} are the only solutions with unobstructed deformations in the moduli-space of instantons sharing the same asymptotic behaviour.  

The first $G_2$-instantons found on the Bryant-Salamon manifold were constructed in 2014 by Clarke in \cite{clarke}. They are parameterised by the interval $[0,\infty)$ and exist on one of the two possible $\SU(2)^3$-invariant $\SU(2)$-bundles over $\mathbf{S}(S^3)$. Lotay-Oliveira found in \cite{goncalog2} a single solution on the other $\SU(2)$-bundle and showed that, outside of a compact subset, this solution is the limit of the family constructed by Clarke. In this paper, we show that this single solution actually lies at the centre of a 1-parameter family of solutions, parameterised by the interval $[-1,1]$. The resulting moduli space of invariant $G_2$-instantons is shown in Figure \ref{fig:mod}.

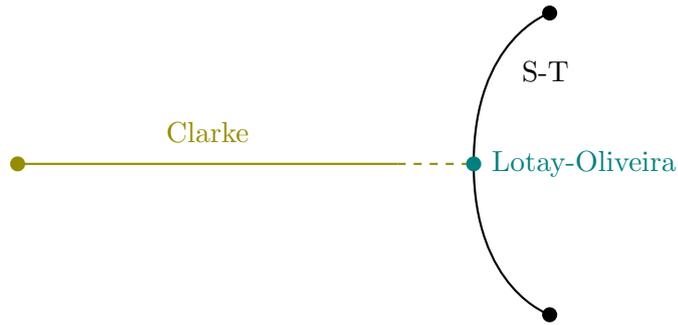
\begin{figure}[h]
\centering
\begin{tikzpicture}
\draw[olive,thick,-] (0,0) -- (5,0)
    node[pos=0,mynode,fill=olive]{}
    node[pos=0.5,label=above:\textcolor{olive}{Clarke}]{};

\draw [black,thick,tension=1.5] plot [smooth] coordinates {(7,-2) (6,0) (7,2)}
node[mynode,fill=black]{};
\draw (7,-2) node[mynode] {};
\draw[olive,thick,dashed] (5,0) -- (6,0)
node[pos=1,mynode,fill=teal,label=right:\textcolor{teal}{Lotay-Oliveira}]{};
\node[below right] at (6.5,1.5) {S-T};
\end{tikzpicture}
\caption{The moduli space of invariant $G_2$-instantons on $\mathbf{S}(S^3)$ with the Bryant-Salamon metric.}\label{fig:mod}
\end{figure}

Constructing $G_2$-instantons on the ALC members of these families is a harder problem. The only known examples of $G_2$-instantons on these ALC $G_2$-manifolds were found by Lotay-Oliveira in \cite{goncalog2}. They exist on the only ALC metric of the $\Bsev$ family that is known explicitly, namely the BGGG metric constructed by Brandhuber–Gomis–Gubser–Gukov in \cite{bggg}.  

With a complete understanding of the moduli space of invariant $G_2$-instantons on the Bryant-Salamon $\mathbf{S}(S^3)$, we may be able to construct instantons on the ALC metrics of the $\mathbb{B}_7$ and $\mathbb{D}_7$ families sufficiently close to the AC limit\footnote{See \cite{stein:calabiyaugauge} for a discussion of constructing examples near the collapsed limit.} via a gluing procedure. Recall that these families may be viewed as desingularisations of the incomplete conically-singular (CS) co-homogeneity one $G_2$-metrics constructed in \cite{Foscolo2018}. By considering instantons on these CS metrics and the instantons on the Bryant-Salamon AC metric, we may be able to follow a similar gluing procedure to produce instantons on ALC members of these families.  

\subsection*{Main results and plan of the paper}

We will first give some preliminary details in \S \ref{section:G2define} about $G_2$-structures and, more specifically, the Bryant-Salamon metric in \S \ref{section:bryantsalamon}. We begin \S \ref{section:G2overview} with an overview of the $G_2$-instanton equations, before focusing on the invariant setting in \S \ref{section:g2instantonodes}. We will recall the system of ODEs corresponding to the $\SU(2)^3$-invariant equations from \cite{goncalog2} in Proposition \ref{prop:g2instode}, and the parametrization of its local solutions in Proposition \ref{prop:G2localsol}. 

The new results contained in this section are outlined in Theorem \ref{thm:g2instintro}, classifying global solutions to the $G_2$-instanton ODEs using the theory of asymptotically autonomous ODE systems from  \cite{markusdiffsystem}.

\begin{thmx} \label{thm:g2instintro} $\SU(2)^3$-invariant instantons on $\mathbf{S}(S^3)$ with gauge group $\SU(2)$, and quadratic curvature decay are in two one-parameter families $T_\gamma$, $\gamma \in \left[ 0, \infty \right)$, and $T'_{\gamma'}$, $\gamma' \in \left[-1, 1 \right]$. Moreover
\vspace{-0.3cm}
\begin{enumerate}
\item The isometry exchanging the factors of $\SU(2)^2$ on the principal orbits sends $T'_{\gamma'} \mapsto T'_{-\gamma'}$;  
\item $T_0$, $T'_1$, $T'_{-1}$ are flat, otherwise $T_\gamma$, $T'_{\gamma'}$ are irreducible;
\item The irreducible $T_\gamma$, $T'_{\gamma'}$ are asymptotic to the pull-back of the unique nearly K\"ahler instanton on $S^3\times S^3$ with rate $-3$.     
\end{enumerate}
\end{thmx}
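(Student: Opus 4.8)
The plan is to reduce the $\SU(2)^3$-invariant $G_2$-instanton equations to the ODE system of Proposition \ref{prop:g2instode}, and then analyse its global solutions as an asymptotically autonomous system. First I would use Proposition \ref{prop:G2localsol} to parametrize the one-parameter families of \emph{local} solutions emanating from the singular orbit $S^3$ (the zero section); the smoothness/closing-up conditions at the origin should cut the space of formal solutions down to the stated parameters $\gamma\in[0,\infty)$ and $\gamma'\in[-1,1]$, with the two bundles corresponding to the two $\SU(2)^3$-invariant $\SU(2)$-bundles over $\mathbf{S}(S^3)$ discussed in the introduction. The quadratic curvature decay hypothesis is then imposed as a selection principle: rewriting the ODE system in a suitable conical coordinate $t=\log r$ (or $r^{-1}$), the rescaled equations become asymptotically autonomous in the sense of Markus \cite{markusdiffsystem}, and the limiting autonomous system is exactly the ODE for $\SU(2)^3$-invariant instantons on the nearly K\"ahler link $S^3\times S^3$.

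Next I would identify the critical points of the limiting autonomous system and show that the only one compatible with quadratic curvature decay corresponds to the unique nearly K\"ahler instanton on $S^3\times S^3$. By the Markus asymptotic phase theorem, any bounded trajectory of the asymptotically autonomous system must converge to a critical point (or a periodic orbit, which a dimension/monotonicity argument should exclude here); linearising at that critical point gives the eigenvalues governing the decay rate, and a computation of these eigenvalues should produce the rate $-3$ in item (3). Conversely, for each admissible initial parameter one shows the local solution extends globally without hitting a finite-time singularity — this is where the co-homogeneity one structure helps, since the principal-orbit equations are a polynomial vector field and one can control blow-up using the conserved/monotone quantities of the $G_2$-instanton system (e.g. a Yang-Mills-type energy or the explicit first integral used in \cite{goncalog2,clarke}).

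Items (1) and (2) are then comparatively direct. For (1), the isometry of $\mathbf{S}(S^3)$ swapping the two $\SU(2)$ factors acting on the principal orbit induces an explicit involution on the ODE variables; tracking how it acts on the boundary parameter identifies it with $\gamma'\mapsto-\gamma'$, and one checks it fixes the $T_\gamma$ family setwise (consistent with that family living on the other bundle, matching Figure \ref{fig:mod}). For (2), reducibility of the connection is equivalent to the $\mathfrak{su}(2)$-valued connection matrix degenerating to an abelian sub-bundle, which translates into the vanishing of certain ODE components; solving these algebraic constraints should leave exactly the endpoints $T_0$, $T'_{1}$, $T'_{-1}$, and one verifies these have vanishing curvature, hence are flat, while genericity of the remaining solutions gives irreducibility.

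The main obstacle I expect is the global existence step: ruling out finite-time blow-up of the local solutions and, at the other end, proving convergence to the correct critical point rather than escape to infinity or to a spurious limit. This requires a genuinely global dynamical-systems argument — constructing a trapping region or a Lyapunov function for the polynomial vector field on the principal orbit — rather than the formal/linear analysis that suffices for the local parametrization and the decay-rate statement. The asymptotically autonomous framework of \cite{markusdiffsystem} is exactly the tool for converting that global control into the stated asymptotics, but verifying its hypotheses (uniform convergence of the vector field, isolatedness of the limiting critical point) is the technical heart of the proof.
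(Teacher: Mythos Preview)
Your overall framework is right --- reduce to the ODE system of Proposition~\ref{prop:g2instode}, parametrise local solutions by Proposition~\ref{prop:G2localsol}, and analyse long-time behaviour via the asymptotically autonomous limit \eqref{eq:G2odecone} and Markus' results --- and this is exactly the paper's route. But there is a genuine conceptual error in your first paragraph that would derail the argument.

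You write that ``the smoothness/closing-up conditions at the origin should cut the space of formal solutions down to the stated parameters $\gamma\in[0,\infty)$ and $\gamma'\in[-1,1]$''. This is false: Proposition~\ref{prop:G2localsol} gives smooth local solutions for \emph{every} $\gamma,\gamma'\in\mathbb{R}$. The singular-orbit boundary conditions only produce the two one-parameter families; they do not restrict the parameter range at all. The intervals $[0,\infty)$ and $[-1,1]$ arise entirely from the \emph{global} analysis: quadratic curvature decay is equivalent to $(f_+,f_-)$ remaining bounded, and one must show that boundedness fails precisely when $\gamma<0$ or $|\gamma'|>1$. If you try to extract the parameter ranges from the local picture you will find nothing, and the proof will stall.

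The paper handles the two families quite differently. For $T_\gamma$ there is no dynamical-systems work at all: Clarke's explicit formula \eqref{eq:clarke} gives the solution in closed form, and one reads off directly that it exists globally iff $\gamma\ge 0$. You should use this rather than appeal to conserved quantities or first integrals (there is no such integral used here). For $T'_{\gamma'}$ the paper does exactly what you suggest in your final paragraph: it constructs explicit forward-invariant trapping regions $\mathcal{R}_0=\{\tfrac{2}{3}<f_+<1,\ 0<f_-<1\}$ and $\mathcal{R}_\infty=\{f_+>1,\ f_->1\}$, checks from the Taylor expansion \eqref{eq:Pidlocalsol} that $T'_{\gamma'}$ enters $\mathcal{R}_0$ for $0<\gamma'<1$ and $\mathcal{R}_\infty$ for $\gamma'>1$, and then uses Markus' theorems to show solutions in $\mathcal{R}_\infty$ are unbounded while those in $\mathcal{R}_0$ converge to the stable node $(\tfrac{2}{3},0)$. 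No Lyapunov function or energy is needed; the monotonicity of $f_-$ in each region does the work.

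One smaller point: the linearisation at $(\tfrac{2}{3},0)$ has repeated eigenvalue $-2$, not $-3$ (Lemma~\ref{lemma:decayrate}). The rate $-3$ in item (3) appears because $|e_i^\pm|=O(t^{-1})$ in the cone metric, so $(f_+-\tfrac{2}{3},f_-)=O(t^{-2})$ translates to $|A-A^{nK}|=O(t^{-3})$. Keep the two normalisations separate when you do the computation.
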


The family $T_{\gamma}$ was constructed by Clarke in \cite{clarke} and it was shown in \cite{goncalog2} that this family bubbles off an ASD instanton transverse to the associative submanifold $S^3$. In this case, the curvature concentrates on the singular orbit. In contrast, the curvature of the instantons in the family $T'_{\gamma'}$ concentrates further along the asymptotic end as $\gamma'\to\pm1$. Indeed, we will see that we can view instantons close to these limits as a gluing of the pull-back of an instanton on the cone to a perturbation of the flat connections $T'_{\pm1}$. A similar gluing procedure can be seen in the construction of $G_2$-instantons in \cite{turner:c7familyinstantons} on the $\C_7$ family of $G_2$-metrics.

In the final section \S \ref{section:deform}, we use a computation of \cite{driscollthesis} to show Proposition \ref{prop:deform}, which describes the moduli-space of $G_2$-instantons on $\mathbf{S} (S^3)$ away from the invariant regime of \S \ref{section:g2instantonodes}. The proposition states that un-obstructed $G_2$-instantons on $\mathbf{S} (S^3)$ asymptotic to the nearly K\"ahler instanton on $S^3\times S^3$ with rate $\mu>-3$ are $\SU(2)^3$-invariant, and hence classified by Theorem \ref{thm:g2instintro}.  

\subsection*{Acknowledgements} Special thanks to Simon Salamon, Gon\c calo Oliveira, Lorenzo Foscolo, Jason Lotay and Johannes Nordstr\"om for their helpful comments and discussions. The first author was funded by the Royal Society, through a studentship supported by the Research Fellows Enhancement Award 2017 RGF-EA-180171, and the EPSRC through the UCL Research Associates Award EP-W522636-1. The second author was funded by the EPSRC Studentship 2106787 and the Simons Collaboration on Special
Holonomy in Geometry, Analysis and Physics \#488631.

\section{$G_2$ structures}\label{section:G2define}

Recall that a $G_2$-structure on a 7-manifold $M$ is a reduction of the frame bundle to the exceptional Lie group $G_2$. This is equivalent to the existence of a non-degenerate $3$-form $\varphi \in \Omega^3 (M)$ which is fixed by the point-wise action of $G_2 \subset GL(7,\R)$ in some framing of the tangent space at each point. Furthermore, this data determines a Riemannian metric, volume-form, and orientation on $M$.

 Existence of such a structure on some oriented 7-manifold is purely topological: it is equivalent to the existence of a spin structure \cite{salamon:riemanniangeometry}. On the other hand, constructing torsion-free $G_2$-structures can be much more difficult. 
\begin{definition} \label{def:g2}
A $G_2$-manifold $\left( M, \varphi\right)$ is a 7-manifold equipped with a torsion-free $G_2$-structure, i.e. $d \varphi = d^* \varphi = 0$. 
\end{definition}  

Although their existence was first suggested by the work of Berger \cite{berger:list}, the first examples of complete, irreducible $G_2$-manifolds were only constructed much later in \cite{bryantsalamon}, by exploiting co-homogeneity one symmetries. Subsequently, another co-homogeneity one example was found in \cite{bggg}, which was later generalised to a one-parameter family in \cite{bogoyavg2}, and a partial proof of existence for a second one-parameter family was given in \cite{bazaikin:c7family}. More recently, infinitely many families of complete, co-homogeneity one $G_2$-metrics have been found in \cite{Foscolo2018}, confirming earlier predictions in the physics literature \cite{gibbonspope:g2mtheory}. In order to understand these constructions, it will be useful to recall some definitions regarding $G_2$ geometry in co-dimension one.

Moreover, suppose $M$ admits a cohomogeneity one action of a Lie group $G$. Let $K_0$ be the stabiliser of the principal orbits of the action, and $K$ be the stabiliser of the singular orbit. In this note, we will encode the cohomogeneity one action in a group diagram, namely 
\[K_0\subset K\subset G.\]

\subsection{$G_2$-structure evolution equations}

If $\iota:N \hookrightarrow M$ is an oriented immersion of a $6$-manifold $N$ into $M$, then a $G_2$-structure $\varphi$ on $M$ naturally equips $N$ with an $\SU(3)$-structure, namely
\begin{align} \label{su3define}
\omega = \iota^* \left( \hat{n} \lrcorner\ \varphi \right)& &\Reom = \iota^* \varphi& &\Imom = \iota^* \left(- \hat{n} \lrcorner *\varphi \right)
\end{align}
where $\hat{n}$ the canonical unit normal to $\iota(N) \subset M$, defined by the chosen orientation and the Riemannian metric induced by $\varphi$.

If $\iota$ is an embedding, then we can view $N \subset M$ as an oriented hyper-surface, and a tubular  neighbourhood of $N\subset M$ can be identified with $N \times I$ for some interval $I \subseteq \R$ using the exponential map. In these coordinates, the metric on $M$ appears as $g = dt^2 + g_t$ for some $t$-dependent metric $g_t$ on $N$, and \eqref{su3define} gives rise to a family of $\SU(3)$-structures $\left(\omega, \Omega \right)_{t \in I}$ inducing $g_t$. Meanwhile, the $G_2$-structure on $M$ appears as 
\begin{align} \label{G2define}
\varphi = dt \wedge \omega + \Reom& &*\varphi = - dt \wedge \Imom + \tfrac{1}{2} \omega^2
\end{align}
and if this $G_2$-structure is torsion-free, then $\left( \omega, \Omega \right)_{t \in I}$ satisfy the following \textit{half-flat} structure equations on $N$ 
\begin{align} \label{halfflat}
d \omega \wedge \omega = 0& &d \Reom=0
\end{align}
subject to the evolution equations
\begin{align} \label{halfflatdynamic}
d \omega = \partial_t \Reom& &d \Imom = - \tfrac{1}{2} \partial_t \left( \omega^2 \right)
\end{align}
Observe that \eqref{halfflat} is preserved under \eqref{halfflatdynamic}, which allows us to interpret a torsion-free $G_2$-structure (at least locally) as a flow by \eqref{halfflat} in the space $\SU(3)$-structures satisfying \eqref{halfflat} on some fixed $6$-manifold, cf. \cite{hitchin:stableforms}.

We now consider a special case of equations satisfying both \eqref{halfflat} and \eqref{halfflatdynamic}. Let $N$ be a 6-manifold equipped with a fixed $\SU(3)$-structure $\left( \omega^{nK}, \Omega^{nK}\right)$ satisfying the structure equations 
\begin{align} \label{nearlykahler}
d \omega^{nK} = 3 \Reom^{nK}& &d\Imom^{nK} = -2 \left(\omega^{nK}\right)^2.
\end{align}
Then the 1-parameter family $\left(\omega, \Omega\right)_{t \in \mathbb{R}_{>0}}$ of $\SU(3)$-structures 
\begin{align} \label{nearlykahler2}
&\omega = t^2 \omega^{nK}  &\Omega = t^3 \Omega^{nK}
\end{align}
satisfy \eqref{halfflat} and \eqref{halfflatdynamic} if and only if $\left( \omega^{nK}, \Omega^{nK}\right)$ satisfies \eqref{nearlykahler}. 
As in \eqref{G2define}, this family defines the \textit{conical $G_2$-structure} $\varphi_C$ on $\mathbb{R}_{>0} \times N$, given by 
\begin{align} \label{G2cone}
\varphi_C = t^2 dt \wedge \omega^{nK} + t^3 \Reom^{nK}& &*\varphi_C = - t^3 dt \wedge \Imom^{nK} + \tfrac{1}{2} t^4 \left(\omega^{nK}\right)^2.
   \end{align}
It is torsion-free if and only if $\left( \omega^{nK}, \Omega^{nK}\right)$ satisfies the structure equations \eqref{nearlykahler}. We refer to an $\SU(3)$-structure $\left( \omega^{nK}, \Omega^{nK}\right)$ satisfying \eqref{nearlykahler} as being \textit{nearly-K\"{a}hler}; one can show that such an $\SU(3)$-structure induces a nearly-K\"{a}hler metric $g^{nK}$ on $N$ or, in other words, the $G_2$-metric $g_C$ induced by $\varphi_C$ on  $\mathbb{R}_{>0} \times N$ is a metric cone $g_C = dt^2 + t^2 g^{nK}$. 

\subsection{Bryant-Salamon $G_2$-manifold} \label{section:bryantsalamon}
The spinor bundle $\mathbf{S}(S^3)$ of the 3-sphere admits a one-parameter family of co-homogeneity one $G_2$-metrics described by Bryant-Salamon in \cite{bryantsalamon}. This parameter represents the volume of the zero-section $S^3 \subset \mathbf{S}(S^3)$, or alternatively, the coefficient of the cohomology class $\left[ \varphi \right]$ of $\mathbf{S}(S^3)$, and can be fixed up to diffeomorphisms by scaling the resulting metric. In this section, we will give a short exposition of this construction, following \cite{bryantsalamon}, \cite{goncalog2}, \cite{Foscolo2018}. 

The total space of the spinor bundle can be written homogeneously as \[\mathbf{S}(S^3) = \SU(2)^2\times_{\Delta \SU(2)}\mathbb{H},\] where $\Delta \SU(2)$ acts on the right diagonally, and it admits a co-homogeneity one action of $\SU(2)^3$, viewed here as acting on the left \cite[\S 3]{bryantsalamon}. The corresponding group diagram is 
\begin{align*}
\Delta_{1,2,3} \SU(2) \subset \Delta_{1,2} \SU(2) \times \SU(2) \subset \SU(2)^3
\end{align*}
where $\Delta_{1,2} \SU(2)$ and $\Delta_{1,2,3} \SU(2)$ denote the diagonal $\SU(2)$-subgroup in the first two factors of $\SU(2)^3$ and the diagonal subgroup in all three factors respectively.  

As a smooth manifold, $\mathbf{S}(S^3)$ is diffeomorphic to $S^3 \times \mathbb{R}^4$. This diffeomorphism can be written $\SU(2)^3$-equivariantly if we identify $S^3 \times \R^4$ with $\SU(2) \times \mathbb{H}$, equipped with the $\SU(2)^3$-action 
\begin{align} \label{action}
\left(q_1,q_2,q_3\right)\cdot(p,v)\mapsto (q_1 p\bar{q}_2,q_3v\bar{q}_1)
\end{align}
for $\left(q_1,q_2,q_3\right) \in \SU(2)^3$ and $(p,v) \in \SU(2) \times \mathbb{H}$. An equivariant diffeomorphism with respect to \eqref{action} can be written:
\begin{align*} 
 \SU(2)^2\times_{\Delta \SU(2)}\mathbb{H}  \to\SU(2) \times \mathbb{H}:\left[ p_1, p_2, v\right]\mapsto \left( p_1 \bar{p}_2, v \bar{p}_1 \right).
\end{align*} 

\begin{remark} Taking cyclic permutations of the factors of $\textup{SU}(2)^3$ in \eqref{action} gives three non-equivariantly isometric realisations of $\mathbf{S}(S^3)$. These arise naturally when considering the Bryant-Salamon metric as limits of the $\mathbb{B}_7$ and $\mathbb{D}_7$ families of $G_2$-metrics on $S^3 \times \mathbb{R}^4$.   
\end{remark}  

With the action of $\SU(2)^3$ explained, we will write down the $\SU(2)^3$-invariant $G_2$-structure on the space of principal orbits $\mathbb{R}_{>0} \times S^3 \times S^3$, and the induced $G_2$-holonomy metric. Here, we  $\SU(2)^3$-equivariantly identify $S^3 \times S^3$ with the principal orbit $\SU(2)^3 / \Delta_{1,2,3} \SU(2)$ via the inclusion map $\SU(2)^2 \times \lbrace 1 \rbrace \hookrightarrow \SU(2)^3$, where we view $\SU(2)^2 \times \lbrace 1 \rbrace$ acting on the left of $S^3 \times S^3 = \SU(2)^2$ in the obvious way, and $\lbrace 1 \rbrace \times \lbrace 1 \rbrace \times \SU(2)$ acting diagonally on the right\footnote{Since $\lbrace 1 \rbrace \times \lbrace 1 \rbrace \times \SU(2)$ acts trivially on the singular orbit, we can identify the singular orbit $\SU(2)^3 / \Delta_{1,2} \SU(2) \times \SU(2)$ with $\SU(2)^2 / \Delta \SU(2)$ in the same way.}. 

Denote by $e_1, e_2, e_3$, $e'_1, e'_2, e'_3$ a basis of left-invariant one-forms on $S^3 \times S^3 = \SU(2)^2$ such that   
\begin{align*}
d e_i = - e_j \wedge e_k& &d e'_i = - e'_j \wedge e'_k
\end{align*}
for $\left(i j k\right)$ any cyclic permutation of $\left( 1 2 3 \right)$. The diagonal right action of $\SU(2)$ on the space of left-invariant one-forms is via two copies of the adjoint representation $\mathfrak{su}_+(2) \oplus  \mathfrak{su}_-(2)$, where $\mathfrak{su}_\pm(2)$ is given by the linear span of the one-forms $e_i^\pm := \tfrac{1}{2} \left(e_i \pm e'_i \right)$ over $i = 1,2,3$.   

Up to discrete symmetries, which act by isometries on the induced $G_2$-holonomy metric, the $G_2$-structure inducing the Bryant-Salamon metric is given by the following closed form on $\mathbb{R}_{>0} \times S^3 \times S^3$
\begin{align} \label{eq:g2structure}
\varphi = p (e_1 \wedge e_2 \wedge e_3  - e'_1 \wedge e'_2 \wedge e'_3) + d \left( a(t) \sum_{i=1}^3e_i \wedge e'_i \right)     
\end{align} 
where $p>0$ is a constant representing the size of the cohomology class $\left[ \varphi \right]$, and $a(t)$ is a real-valued function of the geodesic variable $t \in \R_{\geq 0}$. Then $\varphi$ is co-closed if $a$ satisfies 
\begin{align} \label{hitchinfloweq0}
4\dot{a}^6 = 3a^4 - 8p a^3 + 6 p^2 a^2 - p^4.      
\end{align}
This has a unique solution with $a(0) = p$ for each $p$, extending smoothly over the singular orbit $S^3$.  

Using \cite[\S 5]{madsen:halfflat}, the metric $g = dt^2 + g_t$ on $\R_{>0} \times S^3 \times S^3$ induced by $\varphi$ can be written: 
\begin{align} \label{eq:g2metric}
g= dt^2 + A^2 \left(\sum_{i=1}^3 \left(e_i^+\right)^2 \right) + B^2 \left( \sum_{i=1}^3 \left(e_i^-\right)^2 \right)
\end{align}
for a pair of functions $\left(A,B\right)$, defined such that $A>0, B>0$ on $\R_{>0}$ and 
\begin{align*}
a = \frac{1}{8} \left( B^3 + B A^2 \right)& &p = \frac{1}{8} \left( B^3 - 3 B A^2 \right)
\end{align*}
The induced metric \eqref{eq:g2metric} has holonomy contained in $G_2$ if $\varphi$ is co-closed. By writing \eqref{hitchinfloweq0} in terms of the metric coefficients $\left(A,B\right)$, we see that this is equivalent to  $(A,B)$ satsifying
\begin{align} \label{hitchinfloweq}
\dot{A} = \frac{1}{2} \left( 1 - \frac{A^2}{B^2} \right)& & \dot{B} = \frac{A}{B}.   
\end{align}   
Clearly, taking $ A = \frac{1}{3} t$, $B = \frac{\sqrt{3}}{3} t$ is a solution to \eqref{hitchinfloweq} and this corresponds to the $\SU(2)^3$-invariant $G_2$-holonomy conical metric on $\R_{>0} \times S^3 \times S^3$. 

Up to scaling the resulting metric, and the diffeomorphism $t \mapsto \delta t$ rescaling the fibres of $\mathbf{S}(S^3)$ by a constant $\delta>0$, there is a unique solution to \eqref{hitchinfloweq} extending over the singular orbit $S^3 = \SU(2)^2 / \Delta \SU(2)$. This can be written explicitly (see \cite[Section 2.2.1]{goncalog2}) in terms of the variable $r(t) = \sqrt{3} B(t) $, $r \in \left[ 1, \infty \right)$ as 
\begin{align} \label{eq:g2sols}
A(r) = \tfrac{r}{3} \sqrt{1- r^{-3}}& &B(r) = \tfrac{r}{\sqrt{3}}. 
\end{align}
The asymptotic model for the geometry of this metric is the co-homogeneity one $G_2$-cone over the nearly-K\"{a}hler $S^3 \times S^3 = \SU(2)^3 / \Delta_{1,2,3} \SU(2)$. Outside the singular orbit, we can identify $\mathbf{S}(S^3)$ with the smooth manifold $\R_{>0} \times S^3 \times S^3$, and the complete $G_2$-metric $g$ given by \eqref{eq:g2metric}, with $\left(A,B \right)$ defined by \eqref{eq:g2sols}, satisfies \[\vert g - g_C \vert = O(t^{-3})\] as $t\rightarrow \infty$, where $t$ denotes the radial parameter on the cone and we take norms with respect to the conical metric $g_C$.

\section{$G_2$ Gauge Theory} \label{section:G2overview} 
Let $\left(M^7,\varphi \right)$ be a $G_2$-manifold, equipped with a principal $G$-bundle $P \rightarrow M$ for a compact, semi-simple Lie group $G$, where $\varphi \in \Omega^3 (M)$ is a torsion-free $G_2$-structure on $M$. Recall that a connection $A$ on $P$ is called a $G_2$-\textit{instanton} if it satisfies the $G_2$-\textit{instanton equations}: 
\begin{align} \label{eq:g2instanton0}
F_A \wedge {*} \varphi = 0. 
\end{align}
We will write \eqref{eq:g2instanton0} in the case where $\left( M, \varphi \right)$ is foliated by parallel hyper-surfaces. Recall from \S \ref{section:G2define} that we can define a $G_2$-structure on $M$ 
\begin{align*} 
\varphi = dt \wedge \omega + \Reom& &*\varphi = - dt \wedge \Imom + \tfrac{1}{2} \omega^2
\end{align*} 
in terms of a one-parameter family of half-flat structures $\left( \omega, \Omega \right)_{t \in I}$. Written in temporal gauge $A=A_t$, the $G_2$-instanton equations are
\begin{subequations} \label{g2instanton}
\begin{align} 
F_{A_t} \wedge \omega^2 =0, \label{g2inst1}\\
F_{A_t} \wedge \Imom - \frac{1}{2} \partial_t A_t \wedge \omega^2 = 0. \label{g2inst2} 
\end{align}
\end{subequations}
If the $G_2$-structure $\varphi$ is torsion-free, then $\left( \omega, \Omega \right)_{t \in I}$ is subject to the evolution equation \[d \Imom = - \tfrac{1}{2} \partial_t \left( \omega^2 \right).\] Thus the $G_2$-instanton equation \eqref{g2inst1} is preserved under evolution by \eqref{g2inst2}.  

If $\left(M,\varphi\right)$ is a $G_2$ cone, i.e. $\varphi=\varphi_C$ is given by \eqref{G2cone}, then scale invariant solutions to \eqref{g2instanton} are pulled back from solutions to the Hermitian Yang-Mills equations on the link 
\begin{align} \label{eq:NKinstanton}
F_{A} \wedge \Reom^{nK} = 0& &F_{A} \wedge \left(\omega^{nK}\right)^2 =0. 
\end{align}
Solutions of \eqref{eq:NKinstanton} are referred to as \textit{nearly-K\"{a}hler instantons} and appear naturally as asymptotic limits of $G_2$-instantons on asymptotically conical $G_2$-manifolds (cf. \cite{nKinstantons:harland}).

\subsection{Invariant Instanton ODEs} \label{section:g2instantonodes}

In the invariant setting, we consider the $\SU(2)^3$-invariant co-homogeneity one $G_2$-metrics on the spinor bundle $\mathbf{S}(S^3)$ described in \S \ref{section:G2define}. Following \cite{goncalog2}, we assume the bundle and connection form are also invariant under some lift of the $\SU(2)^3$-action to the total space of the bundle.  

$\SU(2)^3$-homogeneous bundles over the principal orbit $\SU(2)^3 / \Delta \SU(2)$ of $\mathbf{S}(S^3)$ with gauge group $\SU(2)$ are classified by homomorphisms $\Delta \SU(2) \ra \SU(2)$. This gives exactly two non-equivariantly equivalent bundles over the principal orbit, which are both trivial bundles topologically but only one is equivariantly trivial. They are defined by the trivial homomorphism $\Delta \SU(2) \ra \SU(2)$ and the identity homomorphism $\Delta \SU(2) \ra \SU(2)$, and are of the form
\begin{align*}
P=\SU(2)^3\times_{\Delta  \SU(2)}  \SU(2).
\end{align*}
Wang's Theorem \cite[Thm. A]{wang1958} tells us that the space of invariant connections on these homogeneous bundles is an affine space of intertwiners of the $\Delta \SU(2)$-action on left-invariant one-forms on $\SU(2)^3 / \Delta \SU(2)$ and on the Lie-algebra of the gauge group. Recall also from \S \ref{section:G2define} that the action of $\Delta \SU(2)$ acts on the tangent space of $\SU(2)^3 / \Delta \SU(2)= \SU(2)^2$ as two copies of the adjoint representation $\mathfrak{su}_+(2) \oplus \mathfrak{su}_- (2)$. 

Since $\Delta \SU(2)$ acts trivially on the gauge group for the trivial homogeneous bundle, the only $\SU(2)^3$-invariant connection on this bundle is the trivial flat connection. Meanwhile, for the non-trivial homogeneous bundle, $\Delta \SU(2)$ acts on $\mathfrak{su}(2)$ via the adjoint representation, thus the space of invariant connections is two-dimensional by Schur's Lemma. 

Using this description of invariant connections, and the description of $\SU(2)^3$-invariant torsion-free $G_2$-structures \eqref{eq:g2structure} in \S \ref{section:G2define}, Lotay and Oliveira \cite[Prop. 5]{goncalog2} write the $G_2$-instanton equations \eqref{g2instanton} as in the following proposition. 
\begin{prop}[\cite{goncalog2}]\label{prop:g2instode} On $\mathbb{R}_{>0} \times S^3 \times S^3$ with a $G_2$-structure given by \eqref{eq:g2structure}, $\textup{SU}(2)^3$-invariant instantons can be written, up to gauge transformation, as
\begin{align} A_t = f_+ \left( \sum_{i=1}^3 E_i \otimes e_i^+ \right) +  f_- \left(  \sum_{i=1}^3 E_i \otimes e_i^- \right)
\end{align}
with $\lbrace E_i \rbrace_{i=1,2,3}$ a basis of left-invariant vector-fields dual to $\lbrace e_i \rbrace_{i=1,2,3}$, and real-valued functions $\left( f_+, f_- \right)$ satisfying the ODE system  
\begin{align} \label{eq:g2instode}
\dot{f}_+ = \frac{f_+}{A}\left(1- \frac{A^2}{B^2} - f_+ \right) + f_-^2 \frac{A}{B^2}& &\dot{f}_-= \frac{2f_-}{A} \left( f_+ - 1 \right). 
\end{align}
\end{prop}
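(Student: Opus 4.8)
The plan is to follow the derivation of Lotay and Oliveira \cite{goncalog2}: parametrise the invariant connections via Wang's theorem, compute their curvature with the Maurer--Cartan equations, and expand the temporal-gauge instanton system \eqref{g2instanton} against the $\SU(3)$-structure forms written in the $e_i^{\pm}$ coframe. For the first step, Wang's theorem identifies invariant connections on the non-trivial homogeneous bundle with a $2$-dimensional affine space of $\Delta\SU(2)$-intertwiners between the isotropy representation $\mathfrak{su}_+(2)\oplus\mathfrak{su}_-(2)$ and the adjoint representation of the gauge group, and Schur's lemma pins these intertwiners down up to scale. Taking the canonical invariant connection as reference and using gauge freedom to pass to temporal gauge, an invariant connection is then $A_t=\sum_{i=1}^3 E_i\otimes\bigl(f_+(t)\,e_i^++f_-(t)\,e_i^-\bigr)$, which is the claimed normal form.

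Next I would compute the curvature $F_{A_t}=dA_t+\tfrac12[A_t\wedge A_t]$ of $A_t$ on the slice $S^3\times S^3$. From $de_i=-e_j\wedge e_k$ and $de_i'=-e_j'\wedge e_k'$ one obtains $de_i^+=-(e_j^+\wedge e_k^++e_j^-\wedge e_k^-)$ and $de_i^-=-(e_j^+\wedge e_k^-+e_j^-\wedge e_k^+)$; combined with the structure constants of $\mathfrak{su}(2)$ this writes $F_{A_t}=\sum_m E_m\otimes F^m$, with each $F^m$ an explicit $2$-form in the span of $\{e_j^+\wedge e_k^+,\,e_j^-\wedge e_k^-,\,e_j^+\wedge e_k^-:j\neq k\}$ and coefficients polynomial of degree $\le 2$ in $(f_+,f_-)$; crucially no $e_i^+\wedge e_i^-$ term appears. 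In parallel I would assemble the $\SU(3)$-structure in this coframe: from \eqref{eq:g2structure} and \eqref{G2define}, using $\dot a=\tfrac12 AB$ (a consequence of \eqref{hitchinfloweq}), one reads off $\omega=-AB\sum_i e_i^+\wedge e_i^-$, hence $\omega^2=2(AB)^2\sum_{i<j}e_i^+\wedge e_i^-\wedge e_j^+\wedge e_j^-$; decomposing $\Reom$ into the $e^\pm$ coframe and applying the Hodge star of the slice metric $A^2\sum(e_i^+)^2+B^2\sum(e_i^-)^2$ (equivalently quoting \cite{madsen:halfflat}) expresses $\Imom$ as a fixed linear combination of $e_1^+e_2^+e_3^+$, $e_1^-e_2^-e_3^-$ and the mixed monomials $e_i^+e_j^+e_k^-$, $e_i^+e_j^-e_k^-$, with coefficients built from $A$ and $B$.

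With these ingredients, \eqref{g2inst1} is automatic: since $F_{A_t}$ has no $e_i^+\wedge e_i^-$ component, every $2$-form appearing in it wedges $\omega^2$ to zero. For \eqref{g2inst2}, both $F_{A_t}\wedge\Imom$ and $\tfrac12\,\partial_tA_t\wedge\omega^2$ (with $\partial_tA_t=\sum_iE_i\otimes(\dot f_+e_i^++\dot f_-e_i^-)$) are $\mathrm{ad}P$-valued $5$-forms on the $6$-dimensional slice, and by $\Delta\SU(2)$-equivariance each is a combination of $E_m\otimes{*_{g_t}}e_m^+$ and $E_m\otimes{*_{g_t}}e_m^-$. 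Matching the coefficients of $E_m\otimes{*_{g_t}}e_m^+$ gives one scalar equation and those of $E_m\otimes{*_{g_t}}e_m^-$ the other; performing the two wedge products and using \eqref{hitchinfloweq} to eliminate $\dot A$ and $\dot B$ then reduces the system to \eqref{eq:g2instode}.

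The main obstacle is bookkeeping: computing every term of $F_{A_t}$ and of $\Imom$ in the $e_i^\pm$ coframe, and correctly tracking which of the many wedge products survive. Since $\Delta\SU(2)$-equivariance forces each object into a handful of isotypic components, I would organise the whole computation by isotype rather than by brute force, and sanity-check the final constants against the conical solution $A=\tfrac13t$, $B=\tfrac1{\sqrt3}t$ and against the flat connections, for which \eqref{eq:g2instode} must hold consistently.
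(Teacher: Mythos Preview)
The paper does not actually prove this proposition: it states it as a quotation from Lotay--Oliveira \cite[Prop.~5]{goncalog2}, preceded only by the setup you reproduce (Wang's theorem, the two homogeneous bundles, and Schur's lemma giving a two-dimensional space of invariant connections on the non-trivial bundle). Your proposal is the standard derivation and is consistent with that setup; in particular your observation that \eqref{g2inst1} is automatic because $F_{A_t}$ has no $e_i^+\wedge e_i^-$ component, and your plan to read off \eqref{eq:g2instode} from the two isotypic pieces of \eqref{g2inst2}, is exactly how the computation in \cite{goncalog2} proceeds. There is nothing further to compare against here.
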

Here, we can identify both $\SU(2)^3$-homogeneous bundles with the trivial $\SU(2)^2$-homogeneous bundle over $S^3 \times S^3$, up to $\SU(2)^2$-equivariant isomorphism. We can recover the case of the flat connection on the trivial $\SU(2)^3$-homogeneous bundle by taking $f_+ = f_- =0$. 

We note in the following lemma that there is an additional discrete symmetry of \eqref{eq:g2instode}, a pull-back of the non-equivariant isometry exchanging the factors of $S^3 \times S^3$ on the principal orbits.
\begin{lemma} \label{symmetry:g2instode} The transformation $\left(f_+,f_- \right) \ra \left( f_+, - f_- \right)$ is a symmetry of \eqref{eq:g2instode}. 
\end{lemma}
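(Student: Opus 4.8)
The plan is to verify directly that the substitution $(f_+, f_-) \mapsto (f_+, -f_-)$ carries solutions of \eqref{eq:g2instode} to solutions, since the statement is purely about the ODE system. First I would look at the $\dot f_+$ equation and note that its right-hand side depends on $f_-$ only through the term $f_-^2 \, A/B^2$, which is manifestly invariant under $f_- \mapsto -f_-$; as $f_+$, $A$, $B$ are untouched by the transformation, the first equation is preserved verbatim.

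Next I would treat the $\dot f_-$ equation: under the substitution its left-hand side becomes $-\dot f_-$, while the right-hand side $\tfrac{2 f_-}{A}(f_+ - 1)$ becomes $-\tfrac{2 f_-}{A}(f_+-1)$. Both sides simply change sign, so the equation is again satisfied. Hence if $(f_+, f_-)$ solves \eqref{eq:g2instode} on some interval, so does $(f_+, -f_-)$, which is exactly the assertion of the lemma.

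Since the computation is immediate, there is no genuine obstacle; the only point worth recording is the geometric origin hinted at in the statement. The transformation is the pullback, to the space of invariant connections, of the non-equivariant isometry of $\mathbf{S}(S^3)$ exchanging the two $\SU(2)$ factors on the principal orbits: this acts on the left-invariant coframe by $e_i \leftrightarrow e'_i$, hence by $e_i^+ \mapsto e_i^+$ and $e_i^- \mapsto -e_i^-$, and feeding this into the ansatz $A_t = f_+\sum_i E_i \otimes e_i^+ + f_-\sum_i E_i \otimes e_i^-$ precisely fixes $f_+$ and sends $f_- \mapsto -f_-$. I would include a one-sentence version of this remark to explain why the symmetry exists, but it is not logically needed for the proof. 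The payoff, used later, is that this symmetry lets one restrict attention to $f_- \ge 0$ in the classification underlying Theorem \ref{thm:g2instintro}.
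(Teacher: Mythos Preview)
Your proposal is correct and matches the paper's treatment: the paper states the lemma without proof, merely remarking beforehand that the symmetry is the pull-back of the isometry exchanging the two $S^3$ factors on the principal orbits. Your direct verification that $f_-$ enters the first equation only through $f_-^2$ and that both sides of the second equation change sign is exactly the (omitted) one-line check, and your geometric remark reproduces the paper's explanation.
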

Before stating the main theorem, we will study the behaviour of \eqref{eq:g2instode} in the two limits $t\ra 0$, $t\ra \infty$. Firstly, up to $\SU(2)^2$-equivariant isomorphism, we can extend the trivial bundle over the principal orbit $S^3 \times S^3$ to the singular orbit $S^3 = \SU(2)^2 / \Delta \SU(2)$ at $t=0$ in one of two ways: using either the identity homomorphism $\Delta \SU(2) \ra \SU(2)$ or the trivial homomorphism\footnote{These $\SU(2)^2$-equivariant bundles are referred to, respectively, as $P_\mathrm{Id}$ and $P_1$ in \cite{goncalog2}.}. As is shown in \cite{goncalog2}, each extension gives a one-parameter family of solutions to \eqref{eq:g2instode} near $t=0$; we restate their results in the following proposition.
 \begin{prop}[\cite{goncalog2}] \label{prop:G2localsol} In a neighbourhood of the singular orbit at $t=0$, solutions to \eqref{eq:g2instode} are in two one-parameter families $T_\gamma$, $T'_{\gamma'}$ for parameters $\gamma, \gamma' \in \mathbb{R}$:
 \begin{enumerate}
 \item The family $T_\gamma$ extends over the trivial $\SU(2)^2$-homogeneous bundle over the singular orbit, and these solutions satisfy, in a neighbourhood of $t=0$,
  \begin{align}  \label{eq:P0localsol}
f_+ = \gamma t^2 + O(t^4),&  &f_- = 0.
 \end{align}
 \item  The family $T'_{\gamma'}$ extends over the non-trivial $\SU(2)^2$-homogeneous bundle over the singular orbit, and these solutions satisfy, in a neighbourhood of $t=0$
  \begin{align} \label{eq:Pidlocalsol}
f_+ = 1 + O(t^2),&  &f_- = \gamma' + O(t^2). 
 \end{align} 
 \end{enumerate}
 \end{prop}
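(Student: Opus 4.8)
The plan is to regard \eqref{eq:g2instode} as a singular (Fuchsian-type) ODE system with a regular singular point at $t=0$, to read off the admissible local solutions from an indicial analysis combined with a contraction-mapping argument, and then to match the two branches of solutions with the two ways of extending the bundle over the singular orbit. First I would record the behaviour of the metric coefficients near $t=0$: from \eqref{hitchinfloweq}, with $A(0)=0$ and $B_0:=B(0)>0$, one gets $A(t)=\tfrac12 t+O(t^3)$ and $B(t)=B_0+O(t^2)$, hence
\begin{align*}
\frac1A=\frac2t\bigl(1+O(t^2)\bigr),\qquad \frac{A}{B^2}=\frac{t}{2B_0^{2}}\bigl(1+O(t^2)\bigr),\qquad \frac{A^2}{B^2}=O(t^2).
\end{align*}
Substituting into \eqref{eq:g2instode} and multiplying through by $t$ puts the system in the form $t\dot u=M(u)+t\,N(t,u)$ for $u=(f_+,f_-)$, where $M(f_+,f_-)=\bigl(2f_+(1-f_+),\,4f_-(f_+-1)\bigr)$ and $N$ is real-analytic near $t=0$. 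A solution extending continuously to $t=0$ must satisfy $M(u(0))=0$, which forces $f_+(0)\in\{0,1\}$; by the discussion preceding the proposition, these are precisely the two extensions of the trivial $\SU(2)^2$-homogeneous bundle over the singular orbit, via the trivial, respectively the identity, homomorphism $\Delta\SU(2)\to\SU(2)$.

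In the branch $f_+(0)=0$, the linearisation $DM(0,0)=\diag(2,-4)$ has a positive indicial root $2$ in the $f_+$-direction and a negative one $-4$ in the $f_-$-direction. The negative root is inadmissible: from $\dot f_-=\tfrac{2f_-}{A}(f_+-1)$ one sees $|f_-|$ would grow like $t^{-4}$ near $0$ unless $f_-$ vanishes somewhere, in which case $f_-\equiv0$ by uniqueness for this equation (linear in $f_-$ once $f_+$ is given); hence $f_-\equiv0$. The root $2$ is admissible and free, and a contraction-mapping argument on a suitable weighted space of functions vanishing to order $t^2$ --- the standard device for such cohomogeneity-one initial-value problems, cf.\ \cite{goncalog2} --- yields, for each $\gamma\in\R$, a unique local solution with $f_+=\gamma t^2+O(t^4)$, $f_-=0$; these are the $T_\gamma$. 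In the branch $f_+(0)=1$, write $f_+=1+g$. Here $M$ vanishes along the whole line $\{(1,s)\}$, so $DM(1,\gamma')$ has eigenvalue $0$ along that line (the free parameter $\gamma'=f_-(0)$) and eigenvalue $-2$ transverse to it; explicitly $t\dot g=-2g+\tfrac{t^{2}}{2B_0^{2}}(f_-^{2}-1)+(\text{higher order})$, whose inadmissible homogeneous mode $t^{-2}$ is discarded, leaving the particular solution $g=\tfrac{\gamma'^{2}-1}{8B_0^{2}}t^{2}+O(t^{4})$, while $\dot f_-=\tfrac{2f_-}{A}g=O(t)$ makes $f_-=\gamma'+O(t^{2})$. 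The same fixed-point argument promotes this formal data to an honest one-parameter family $T'_{\gamma'}$, $\gamma'\in\R$, and shows every solution in this branch is of this form.

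The step I expect to be the main obstacle is the rigorous passage from this formal/indicial picture to genuine smooth solutions, and in particular the \textbf{exhaustiveness} claim --- that the admissible data ($\gamma$, respectively $\gamma'$) parametrise \emph{all} solutions extending over the singular orbit, with no further freedom and no obstruction. This requires choosing the correct weighted Banach space, verifying that the nonlinear remainder $t\,N(t,u)$ is a contraction on it, and confirming that the inadmissible modes ($t^{-4}$ and $t^{-2}$) are indeed ruled out by smoothness of the connection across the singular orbit (equivalently, by the isotropy representation on its normal bundle). The remaining ingredients --- the expansions of $A$ and $B$, the values of the indicial roots, and the leading coefficients --- are routine computations.
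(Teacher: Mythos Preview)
The paper does not actually prove this proposition; it is quoted from \cite{goncalog2}, and the paper's only contribution is the subsequent Remark recording the next-order coefficients of $T'_{\gamma'}$. Your approach --- recasting \eqref{eq:g2instode} as a first-order system with a regular singular point at $t=0$, reading off the admissible initial data from the zeroes of $M$ and the indicial eigenvalues of $DM$, and then appealing to a singular-IVP/contraction-mapping argument for existence and uniqueness --- is correct and is the standard technique for such cohomogeneity-one problems, and is in substance what \cite{goncalog2} does. Your computations are accurate: in particular, since $B_0=B(0)=1/\sqrt{3}$ by \eqref{eq:g2sols}, your leading coefficient $(\gamma'^{2}-1)/(8B_0^{2})=\tfrac{3}{8}(\gamma'^{2}-1)$ recovers exactly the expansion in the paper's Remark. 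The point you correctly identify as the main obstacle --- promoting the formal indicial picture to genuine smooth solutions and establishing exhaustiveness --- is indeed the substantive step, and is handled in \cite{goncalog2} via the general singular-IVP machinery you allude to.
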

\begin{remark} For later use, we compute some additional terms in the Taylor series of $T'_{\gamma'}$ near $t=0$:
\begin{align}
f_+ = 1 + \frac{3}{8} \left( \gamma'^2 -1 \right) t^2 + O(t^4)& &f_- = \gamma' + \frac{3}{4} \left( \gamma'^2 -1 \right) \gamma' t^2 + O(t^4).
\end{align}
\end{remark} 
The $G_2$-structure on $\mathbf{S}(S^3)$ is asymptotic to the conical $\SU(2)^3$-invariant $G_2$-structure over $S^3 \times S^3$; more precisely $A = \tfrac{t}{3} + O(t^{-2})$, $B = \tfrac{t}{\sqrt{3}} + O(t^{-2})$ for $t$ sufficiently large. So if $\left(f_+,f_-\right)$ are bounded a-priori, the system \eqref{eq:g2instode} differs from the corresponding instanton equations on the cone
\begin{align} \label{eq:G2odecone}
\dot{f}_+ = \frac{1}{t}\left(2 f_+ - 3 f_+^2 + f_-^2 \right),& &\dot{f}_-= \frac{6}{t} f_- \left( f_+ - 1 \right) 
\end{align}
by $O(t^{-4})$ terms.
 
We note here that, as well as symmetry of Lemma \ref{symmetry:g2instode}, there are additional discrete symmetries of the conical equations \eqref{eq:G2odecone}, coming from the non-equivariant isometry of permuting the three copies of $\SU(2)$ in the $G_2$-cone metric over $S^3 \times S^3 = \SU(2)^3/  \Delta \SU(2)$. A detailed exposition of the symmetries of $S^3\times S^3$ can be found in \cite{AW01} and the following lemma describes the pull-back of these symmetries to the conical equations more precisely.
\begin{lemma} \label{lemma:conesymmetries} The permutation group on $3$ elements acts by symmetries on \eqref{eq:G2odecone}. Up to a change of parametrisation $\left( f_+, f_- \right) \mapsto \left( f_+ + \tfrac{2}{3}, \sqrt{3} f_- \right)$,  this action is generated by the transformations 
\begin{align} \label{eq:conesymmetries} \begin{pmatrix} f_+ \\
f_- \\ 
\end{pmatrix} \mapsto \frac{1}{2}\begin{pmatrix} -1 & \sqrt{3} \\
-\sqrt{3} & -1 \\ 
\end{pmatrix} \begin{pmatrix} f_+ \\
f_- \\ 
\end{pmatrix} & \ \ \ \ \text{and} \ \ \ \ \begin{pmatrix} f_+ \\
f_- \\ 
\end{pmatrix} \mapsto \begin{pmatrix} f_+ \\
-f_- 
\end{pmatrix}. 
\end{align}    
\end{lemma}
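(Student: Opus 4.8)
\emph{Plan.} The group $S_3$ enters geometrically as the permutations of the three $\SU(2)$-factors of $\SU(2)^3$: since this group normalises $\Delta_{1,2,3}\SU(2)$, it descends to (isometric) diffeomorphisms of the nearly-K\"ahler $S^3\times S^3=\SU(2)^3/\Delta\SU(2)$, hence of the $G_2$-cone over it, and therefore acts on the space of $\SU(2)^3$-invariant connections carrying solutions of \eqref{eq:NKinstanton} to solutions. This produces the asserted $S_3$-action by symmetries of \eqref{eq:G2odecone}, and the real content of the lemma is to write it down explicitly. My plan is: (i) note that a transformation of $(f_+,f_-)$ is a symmetry of \eqref{eq:G2odecone} exactly when it is a symmetry of the quadratic vector field $X(f_+,f_-)=\bigl(2f_+-3f_+^2+f_-^2,\ 6f_-(f_+-1)\bigr)$, since \eqref{eq:G2odecone} has the form $\dot f=t^{-1}X(f)$; (ii) bring $X$ into a normal form in which the symmetry is manifest; (iii) read off the generators and identify them geometrically.

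\emph{Reduction to normal form.} For step (ii) I would perform the affine change $f_+=\tilde f_++\tfrac23$, $f_-=\sqrt3\,\tilde f_-$ and pass to the complex coordinate $z=\tilde f_++i\tilde f_-$. A direct substitution should turn $X$ into the vector field
\[ \dot z \;=\; -2z-3\bar z^{2}. \]
This form is manifestly equivariant under $z\mapsto\zeta z$ for any cube root of unity $\zeta$ (using $\overline{\zeta}^{\,2}=\zeta$ when $\zeta^{3}=1$) and under complex conjugation $z\mapsto\bar z$. Undoing the complex notation, multiplication by a primitive cube root of unity is precisely the rotation $\tfrac12\bigl(\begin{smallmatrix}-1&\sqrt3\\-\sqrt3&-1\end{smallmatrix}\bigr)$ of \eqref{eq:conesymmetries}, while $z\mapsto\bar z$ is $\tilde f_-\mapsto-\tilde f_-$, i.e.\ $f_-\mapsto-f_-$. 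As a consistency check, in these coordinates the three flat invariant connections $(0,0)$, $(1,\pm1)$ of \eqref{eq:g2structure}/\eqref{eq:g2instode} sit at the vertices of an equilateral triangle centred at the origin, so $z\mapsto\zeta z$ cyclically permutes them.

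\emph{Conclusion and identification.} The two maps $z\mapsto\zeta z$ and $z\mapsto\bar z$ generate the dihedral group of order $6$, which is isomorphic to $S_3$; this establishes that $S_3$ acts by symmetries of \eqref{eq:G2odecone}, generated as stated. To match this with the geometric $S_3$, I would check directly that the transposition swapping the first two factors acts on the left-invariant coframe by $e_i\leftrightarrow e_i'$, hence fixes each $e_i^{+}$ and negates each $e_i^{-}$, so it induces $(f_+,f_-)\mapsto(f_+,-f_-)$ — the orientation-reversing symmetry already recorded in Lemma~\ref{symmetry:g2instode} — while a $3$-cycle acts on $S^3\times S^3$ by a non-linear equivariant diffeomorphism that induces the order-$3$ rotation above, the shift by $\tfrac23$ and the scaling by $\sqrt3$ accounting for the normalisation of the cone metric \eqref{eq:g2metric}.

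\emph{Main obstacle.} There is essentially no analysis here, and the only point that needs care is the claim in step (i) that the orientation-reversing elements of $S_3$ (the transpositions) still preserve the instanton equations \eqref{eq:NKinstanton}; this holds because $F_A\wedge*\varphi$ is insensitive to the relevant orientation reversal, but if one prefers to avoid the issue entirely, the direct verification of the normal form $\dot z=-2z-3\bar z^{2}$ proves the lemma outright without reference to the geometry. The only genuinely tedious part is getting the coefficients right in the substitution leading to that normal form.
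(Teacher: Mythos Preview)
Your argument is correct and follows the same overall strategy as the paper: perform the affine change of variables centring the nearly-K\"ahler instanton at the origin, and then observe that the resulting vector field has the dihedral symmetry. The paper writes out the reparametrised system
\[
\dot f_+ = \tfrac{1}{t}\bigl(3f_-^2 - f_+(3f_+ + 2)\bigr),\qquad \dot f_- = \tfrac{2}{t}f_-(3f_+ - 1),
\]
and simply asserts that the $S_3$-symmetry can be seen from it (pointing to the phase portrait as pictorial evidence). Your route differs in the packaging of this final check: by passing to the complex coordinate $z=\tilde f_+ + i\tilde f_-$ you obtain the normal form $\dot z = -2z - 3\bar z^{2}$, from which the invariance under $z\mapsto\zeta z$ (for $\zeta^3=1$) and $z\mapsto\bar z$ is immediate via $\bar\zeta^{2}=\zeta$. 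This is a genuinely cleaner verification than what the paper offers, and your consistency check that the three flat critical points become the vertices of an equilateral triangle centred at the origin is a nice confirmation. The geometric identification with the $S_3$ permuting the $\SU(2)$-factors is also handled correctly; your remark that the direct computation makes the orientation-reversal issue moot is well taken.
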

\begin{proof} Note that the transformations \eqref{eq:conesymmetries} above are a clockwise rotation about the origin by $\frac{2\pi}{3}$ and a reflection in the plane across the line $\{f_-=0\}$. The resulting symmetries can be seen from the re-parametrised system $\left( f_+, f_- \right) \mapsto \left( f_+ + \tfrac{2}{3}, \sqrt{3} f_- \right)$ 
\begin{align} \label{eq:aut}
\dot{f}_+ = \frac{1}{t}\left( 3 f_-^2 - f_+ \left( 3 f_+ + 2 \right) \right)& &\dot{f}_-= \frac{2}{t} f_- \left( 3 f_+ - 1 \right) .
\end{align}
The phase diagram of this system is shown in Figure \ref{fig:conePhase} and it pictorially demonstrates the $S_3$-symmetry of the system. 
\end{proof} 

\begin{figure}
\includegraphics[scale=0.5]{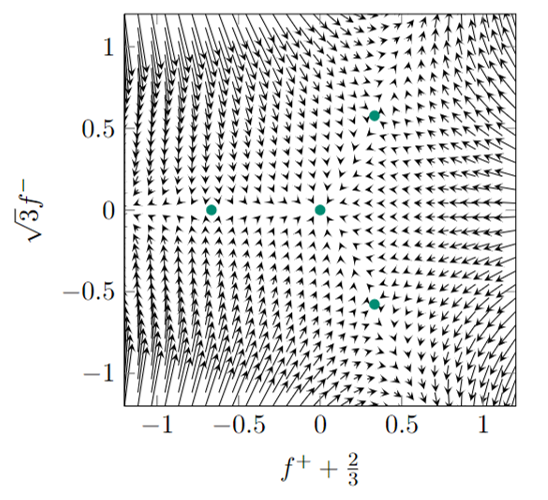}
\caption{The phase diagram of the autonomous system \eqref{eq:aut}, with the four critical points marked; the symmetry described in Lemma \ref{lemma:conesymmetries} is evident here.}\label{fig:conePhase}
\end{figure}

Furthermore, we will see later that any bounded solution of the full system \eqref{eq:g2instode} will converge to one of the critical points of \eqref{eq:G2odecone}, namely
\begin{align*}
\left(0,0\right)& &\left(1,\pm1\right)&  &(\tfrac{2}{3},0).
\end{align*}
These critical points correspond to $\SU(2)^3$-invariant nearly-K\"{a}hler instantons: the points $\left(0,0\right)$, $\left(1,\pm1\right)$ are all the flat connection $A^\flat$ in different non-equivariant gauges, while the only non-trivial instanton is $A^{nK}$, given by $(f_+,f_-)=(\tfrac{2}{3},0)$. Note that $A^{nK}$ is the canonical connection on the non-trivial homogeneous bundle $\SU(2)^3 \ra \SU(2)^3 / \Delta \SU(2)$ over $S^3 \times S^3$, and has been studied previously as a nearly-K\"{a}hler instanton in \cite{nKinstantons:harland}. 

As it will be useful in understanding the deformation theory of $G_2$-instantons converging to $A^{nK}$, we prove the following lemma. 

\begin{lemma} \label{lemma:decayrate} Solutions to \eqref{eq:G2odecone} converging to the asymptotically stable critical point $(\tfrac{2}{3},0)$ are in a two-parameter family $\mu, \nu \in \mathbb{R}$ for $t$ sufficiently large:
\begin{align} \label{eq:asymptotic}
f_+ = \tfrac{2}{3} + \mu t^{-2} + O(t^{-3})& &f_- = \nu t^{-2} + O(t^{-3})    
\end{align}
\end{lemma}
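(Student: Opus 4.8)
The plan is to remove the explicit $t$-dependence from \eqref{eq:G2odecone} and then read off the asymptotics from standard hyperbolic ODE theory near the equilibrium. Setting $f_+ = \tfrac{2}{3} + u$, $f_- = v$ and passing to the logarithmic time $s = \log t$, so that $t\,\partial_t = \partial_s$, the system \eqref{eq:G2odecone} becomes the autonomous system
\begin{align*}
u' = -2u - 3u^2 + v^2, && v' = -2v + 6uv,
\end{align*}
with $' = \tfrac{d}{ds}$, and the critical point $(\tfrac{2}{3},0)$ of \eqref{eq:G2odecone} corresponds to the origin $(u,v)=(0,0)$. The linearisation at the origin is $-2\,\mathrm{Id}$, so the origin is a hyperbolic sink; in particular it is asymptotically stable, consistent with the statement. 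A solution of \eqref{eq:G2odecone} converges to $(\tfrac{2}{3},0)$ as $t\to\infty$ precisely when $(u(s),v(s))\to 0$ as $s\to\infty$, i.e. precisely when it eventually enters the basin of attraction of the sink; since that basin is open, these solutions form a two-parameter family.

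To pin down the rate, write $w = (u,v)$ and $Q(w) = (-3u^2+v^2,\,6uv)$, so the system reads $w' = -2w + Q(w)$ with $Q$ homogeneous quadratic. A routine Gronwall argument shows that any solution with $w(s)\to 0$ satisfies $|w(s)|\le C e^{-2s}$ for $s$ large, whence $|Q(w(s))| = O(e^{-4s})$; feeding this into the variation-of-constants formula
\begin{align*}
w(s) = e^{-2(s-s_0)}w(s_0) + \int_{s_0}^{s} e^{-2(s-\sigma)} Q(w(\sigma))\,d\sigma,
\end{align*}
the integrand $e^{2\sigma}Q(w(\sigma))$ is integrable on $[s_0,\infty)$, and estimating its tail gives $w(s) = c\,e^{-2s} + O(e^{-4s})$ for a unique $c = (\mu,\nu)\in\mathbb{R}^2$. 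Translating back, $f_+ = \tfrac{2}{3} + \mu t^{-2} + O(t^{-4})$ and $f_- = \nu t^{-2} + O(t^{-4})$, which in particular gives the stated $O(t^{-3})$. Finally, fixing $s_0$ large, the map $w(s_0)\mapsto(\mu,\nu) = \lim_{s\to\infty} e^{2s}w(s)$ is smooth and, by the estimate above, the identity to first order at $0$; hence it is a local diffeomorphism, so $(\mu,\nu)$ ranges over a full neighbourhood of $0$ and each value is realised by exactly one solution (up to translation in $s$). This exhibits the two-parameter family \eqref{eq:asymptotic}.

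The one point requiring care is the sharp decay rate: one must observe that the linear part of the autonomous system is a \emph{scalar} multiple of the identity, so there is no Jordan block to produce a $t^{-2}\log t$ correction, and there is no quadratic resonance (that would require $-2 = 2\cdot(-2)$). This is exactly what makes the variation-of-constants estimate close with error $O(e^{-4s})$ rather than merely $O(s\,e^{-2s})$. Equivalently, one could invoke Poincar\'e's linearisation theorem — the absence of resonances makes the autonomous system analytically conjugate to its linear part near $0$ — from which \eqref{eq:asymptotic} is immediate. Everything else is standard.
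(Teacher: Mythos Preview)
Your proof is correct and follows the same route as the paper: pass to logarithmic time $s=\log t$ to make the system autonomous, then read off the asymptotics from the linearisation at the critical point, which is $-2\,\mathrm{Id}$. The paper's proof is considerably terser (it simply states the reparametrisation and the repeated eigenvalue $-2$ and asserts the result), whereas you supply the Gronwall/variation-of-constants details and the non-resonance remark that justify the absence of a $t^{-2}\log t$ term and in fact yield the sharper error $O(t^{-4})$.
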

\begin{proof}
By reparametrising \eqref{eq:G2odecone} by $t \mapsto e^{t}$, we obtain a homogeneous system of the form $\dot{y} = M (y)$. The linearisation at $y_0=(\tfrac{2}{3},0)$ of this system has a repeated eigenvalue $-2$, and hence we can find a 2-parameter family of solutions, as given in the statement of the lemma.
\end{proof} 
\subsection{Complete Solutions} \label{section:completesolutions}
With the two limiting behaviours $t\ra 0$, $t \ra \infty$ understood, we now discuss complete solutions to the ODE system \eqref{eq:g2instode}. The family of local solutions  $T_\gamma$ in Proposition \ref{prop:G2localsol} can be obtained explicitly by solving  \eqref{eq:g2instode} with $f_- = 0$, and was found previously in \cite{clarke}. In terms of the variable $r(t) = \sqrt{3} B(t)$ in \eqref{eq:g2sols}, these solutions are given by 
\begin{align}\label{eq:clarke}
f_+ = \frac{2}{3} \left( 1 + \frac{2 \gamma ( r - 1 ) - 3 r}{2 \gamma r (r^2 -1) + 3r} \right),& &f_-=0.
\end{align}
Clearly, these solutions exist for all time if and only if $\gamma\geq0$, and $\gamma =0$ is just the flat connection $A^\flat$ at $\left(0,0\right)$. Furthermore, in the limit $\gamma \ra \infty$, the solution \eqref{eq:clarke} converges outside the singular orbit at $r=1$ to another explicit solution of \eqref{eq:g2instode}, namely
\begin{align}\label{eq:goncalog2}
f_+ = \tfrac{2}{3} \left( 1 + \tfrac{1}{r ( r+ 1)} \right),& &f_-=0.
\end{align}

The limiting solution \eqref{eq:goncalog2} still extends over the singular orbit, but on a different invariant bundle\footnote{See \cite[Theorem 2]{goncalog2} for an explanation of this limit in terms of the bubbling and removable-singularity phenomenon found in \cite{tian2000gauge}.}, as the member of the family $T'_{\gamma'}$ with $\gamma'=0$. This solution was found previously in  \cite{goncalog2}, but we now show in the following theorem that it lies in a one-parameter family of solutions with $\gamma'$ non-zero.      

\begin{theorem} \label{thm:g2inst} $\SU(2)^3$-invariant instantons on $\mathbf{S}(S^3)$ with gauge group $\SU(2)$, and quadratic curvature decay i.e. norm of the curvature $| F | = O(t^{-2}) $ with respect to the cone metric, are in two one-parameter families $T_\gamma$, $\gamma \in \left[ 0, \infty \right)$, and $T'_{\gamma'}$, $\gamma' \in \left[-1, 1 \right]$. Moreover
\vspace{-0.3cm}
\begin{enumerate}
\item The isometry exchanging the factors of $\SU(2)^2$ on the principal orbits sends $T'_{\gamma'} \mapsto T'_{-\gamma'}$;  
\item $T_0$, $T'_1$, $T'_{-1}$ are flat, otherwise $T_\gamma$, $T'_{\gamma'}$ are irreducible;
\item In the gauge given in Proposition \ref{prop:g2instode}, the irreducible $T_\gamma$, $T'_{\gamma'}$ are asymptotic to $A^{nK}$ with rate $-3$, i.e. $| A - A^{nK} | = O( t^{-3})$ for $A= T_\gamma$, $T'_{\gamma'}$, where we take norms with respect to the cone metric.     
\end{enumerate}
\end{theorem}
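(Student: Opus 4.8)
The plan is to reduce the whole statement to the global analysis of the ODE system \eqref{eq:g2instode} for $(f_+,f_-)$ on $[0,\infty)$. First I would note that, for a solution extending smoothly over the singular orbit, the condition $|F_A|=O(t^{-2})$ with respect to the cone metric is equivalent to $(f_+,f_-)$ being bounded on $[0,\infty)$: writing out $F_A$ in terms of $f_\pm$, $\dot f_\pm$, $A$ and $B$, the forms $dt\wedge e_i^\pm$ and $e_i^\pm\wedge e_j^\pm$ have $g_C$-norm $O(t^{-1})$ and $O(t^{-2})$, while \eqref{eq:g2instode} forces $\dot f_\pm=O(t^{-1})$ as soon as $f_\pm$ are bounded, so the terms quadratic in $f_\pm$ govern the decay. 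By Proposition \ref{prop:G2localsol}, any $\SU(2)^3$-invariant instanton on $\mathbf{S}(S^3)$ restricts to one of the local families $T_\gamma$ or $T'_{\gamma'}$, so the theorem becomes the question of which parameter values give a bounded global solution, together with its asymptotics. For $T_\gamma$ this is immediate from Clarke's explicit formula \eqref{eq:clarke} \cite{clarke}: the denominator is non-vanishing on $r\in[1,\infty)$ precisely when $\gamma\ge0$; $\gamma=0$ is the trivial connection; and for $\gamma>0$ one reads off $f_-\equiv0$ and $f_+-\tfrac23=O(r^{-2})=O(t^{-2})$, so $T_\gamma$ is bounded and converges to $A^{nK}=(\tfrac23,0)$ at rate $-3$.

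The main tool for $T'_{\gamma'}$ is the theory of asymptotically autonomous systems. Since $A=\tfrac{t}{3}+O(t^{-2})$ and $B=\tfrac{t}{\sqrt{3}}+O(t^{-2})$, after the substitution $t=e^s$ the system \eqref{eq:g2instode} is an $O(e^{-4s})$-perturbation of the autonomous conical system \eqref{eq:G2odecone}, which in the coordinates of Lemma \ref{lemma:conesymmetries} is \eqref{eq:aut}. A one-line computation shows the vector field of \eqref{eq:aut} has constant divergence $-4$; hence by Bendixson--Dulac it has no periodic orbits, and also no homoclinic or heteroclinic cycles, since such a cycle would bound a region of non-zero total divergence. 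Therefore the only compact chain-recurrent sets of the conical flow are the four critical points $(0,0)$, $(1,\pm1)$, $(\tfrac23,0)$, and Markus's theorem \cite{markusdiffsystem} gives that every bounded solution of \eqref{eq:g2instode} converges to one of them. I would also record the linearisations: $(\tfrac23,0)$ is an asymptotically stable node (Lemma \ref{lemma:decayrate}), while $(0,0)$ and $(1,\pm1)$ are hyperbolic saddles; in particular, since $\dot f_+>0$ along $\{f_+=0,\ f_-\ne0\}$, no trajectory with $f_+>0$ can converge to $(0,0)$.

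Now I would classify $T'_{\gamma'}$ by exhibiting forward-invariant regions, using the symmetry of Lemma \ref{symmetry:g2instode} to restrict to $\gamma'\ge0$ --- which is exactly assertion (1). For $\gamma'=1$, the constant $(f_+,f_-)=(1,1)$ is an exact solution of \eqref{eq:g2instode} that extends over the non-trivial bundle, so by the uniqueness in Proposition \ref{prop:G2localsol} it equals $T'_1$; it is the flat connection, and $T'_{-1}$ follows by symmetry. For $0\le\gamma'<1$, I would check that the box $[0,1]\times[0,\gamma']$ is forward-invariant (on $\{f_+=1\}$ one has $\dot f_+=\tfrac{A}{B^2}(f_-^2-1)<0$, on $\{f_+=0\}$ one has $\dot f_+\ge0$, on $\{f_-=\gamma'\}$ one has $\dot f_-\le0$, and $\{f_-=0\}$ is invariant), and that $T'_{\gamma'}$ enters this box for small $t>0$ by Proposition \ref{prop:G2localsol} and the Taylor expansion recorded in the subsequent remark. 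Hence $T'_{\gamma'}$ is bounded with $f_+>0$ throughout, so by the previous paragraph it converges to a critical point in the box; $(1,\pm1)$ are excluded as they do not lie in the box, and $(0,0)$ is excluded since $f_+>0$, leaving $A^{nK}=(\tfrac23,0)$. The decay rate $-3$ then follows from Lemma \ref{lemma:decayrate}, together with $|e_i^\pm|_{g_C}=O(t^{-1})$, which makes $|A-A^{nK}|_{g_C}$ comparable to $t^{-1}$ times the distance of $(f_+,f_-)$ to $(\tfrac23,0)$.

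For $\gamma'>1$ I would instead use the unbounded region $R=\{f_-\ge f_+\ge1\}$: the inequality $3B^2>A^2$ on $r\in[1,\infty)$, which holds because $A<B$, makes $R$ forward-invariant, and $T'_{\gamma'}$ enters it for small $t>0$; thus $f_+\ge1$ and $f_-$ is non-decreasing, hence $f_-\ge\gamma'>1$, on the whole interval of existence. If $T'_{\gamma'}$ were bounded it would converge to a critical point with $f_-$-coordinate $\ge\gamma'>1$, and none exists; so $T'_{\gamma'}$ either blows up in finite time or is unbounded as $t\to\infty$, and in either case it does not define a $G_2$-instanton on $\mathbf{S}(S^3)$ with quadratic curvature decay. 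Finally, assertion (2) is the observation that the only constant (hence flat) invariant solutions on $\mathbf{S}(S^3)$ are $T_0$ and $T'_{\pm1}$, while every other $T_\gamma$, $T'_{\gamma'}$ has curvature spanning $\mathfrak{su}(2)$ at generic points, so its holonomy is all of $\SU(2)$ and it is both non-flat and irreducible. The step I expect to be the main obstacle is the dynamical analysis of the second paragraph together with its use in the third and fourth: ensuring that Markus's theorem yields convergence to an actual critical point --- for which the constant-divergence observation and Bendixson--Dulac are essential --- and then constructing invariant regions precise enough to pin down which critical point each $T'_{\gamma'}$ limits to in the two regimes $|\gamma'|<1$ and $|\gamma'|>1$.
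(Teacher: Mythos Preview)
Your approach is correct and parallels the paper's in overall structure, but two differences are worth noting. First, your Bendixson--Dulac observation (constant divergence $-4$ for the autonomous limit \eqref{eq:G2odecone0}) is a genuinely cleaner ingredient: the paper never invokes it, and instead argues directly in its invariant region $\mathcal{R}_0=\{\tfrac{2}{3}<f_+<1,\ 0<f_-<1\}$, using monotonicity of $f_-$ together with explicit estimates on $\dot f_+$ to show the trajectory gets arbitrarily close to $(\tfrac23,0)$, and only then appeals to asymptotic stability via Markus's theorem. Your route---no periodic orbits or heteroclinic cycles, hence every bounded $\omega$-limit set is a single equilibrium---is more conceptual. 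Second, your invariant regions differ from the paper's: the paper uses $\mathcal{R}_0$ and $\mathcal{R}_\infty=\{f_+>1,\ f_->1\}$, whereas you use the larger box $[0,1]\times[0,\gamma']$ and the wedge $\{f_-\ge f_+\ge 1\}$. Both pairs work for the same reasons.

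There is, however, one small gap. Because your box for $0<\gamma'<1$ contains $(0,0)$ in its closure, you must exclude convergence to $(0,0)$, and the justification you give---``$\dot f_+>0$ on $\{f_+=0,\ f_-\neq 0\}$, so no trajectory with $f_+>0$ converges to $(0,0)$''---is not complete: that the boundary $\{f_+=0\}$ repels does not by itself rule out $(f_+,f_-)\to(0,0)$ from the interior for the \emph{non-autonomous} system (one would need a saddle/stable-manifold statement for asymptotically autonomous perturbations). The paper sidesteps this entirely by using the narrower left wall $f_+=\tfrac23$: since $3A^2<B^2$ on $t>0$ by \eqref{eq:g2sols}, one computes $\dot f_+\big|_{f_+=2/3}>0$, so $\{\tfrac23<f_+<1,\ 0<f_-<1\}$ is already forward-invariant and $(0,0)$ never enters the picture. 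Replacing the left edge of your box by $f_+=\tfrac23$ closes the gap with no additional effort, and the rest of your argument then goes through as written.
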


\begin{remark} We note there is an error in \cite[Prop. 5]{goncalog2}, which claims a faster rate of convergence for $T'_{0}$.
\end{remark}

Before proving this theorem, we will say a few words about the quadratic curvature decay condition, and the asymptotic convergence condition $| A - A^{nK} | = O( t^{-3})$ in terms of the ODE system \eqref{eq:g2instode}. We note that $1$-forms $e^\pm_i$ on the link of the cone satisfy $| e^\pm_i | = O(t^{-1})$ with respect to the cone metric, so $| A - A^{nK} | = O( t^{-3})$ if and only if
\begin{align*}
f_+ = \tfrac{2}{3} + O(t^{-2}),& &f_- =  O(t^{-2}).
\end{align*}
The curvature of the connection decaying quadratically can be read off using Proposition \ref{prop:g2instode} and the expression $F_A = F_{A_t} + \dot{A}_t \wedge dt $ for curvature in the temporal gauge $A = A_t$: it is equivalent to solutions $(f_+, f_-)$ of \eqref{eq:g2instode} being bounded.   

With this said, the analysis for the family $T_\gamma$ follows from its explicit form \eqref{eq:clarke}, and the transformation $T'_{\gamma'} \mapsto T'_{-\gamma'}$ is not hard to see from applying Lemma \ref{symmetry:g2instode} to the local expression \eqref{eq:Pidlocalsol} for $T'_{\gamma'}$. For the rest of this section, we will prove Theorem \ref{thm:g2inst} by showing that the local solutions $T'_{\gamma'}$ exist for all time if  $\gamma' \in \left[-1, 1 \right]$, are asymptotically of the form \eqref{eq:asymptotic} if  $\gamma' \in \left(-1, 1 \right)$, and otherwise cannot be bounded. 

The strategy will involve constructing sets that are forward-invariant under evolution by the ODE system \eqref{eq:g2instanton0} and that contain our short-time solutions in Proposition \ref{prop:G2localsol}. Once we have this, we will use the asymptotic description of this system \eqref{eq:G2odecone0} to determine the long-time behaviour of the solutions lying in these invariant sets.

\begin{lemma} The following sets are forward-invariant for \eqref{eq:g2instode}:
\begin{enumerate}[\normalfont(i)]
\item $H_\pm := \lbrace \left(f_+, f_- \right) \in \mathbb{R}^2 \mid \pm f_->0 \rbrace$;
\item $\mathcal{R}_\infty := \lbrace \left(f_+, f_-\right) \in \mathbb{R}^2 \mid f_+>1, f_->1 \rbrace$;
\item  $\mathcal{R}_0 := \lbrace  \left(f_+, f_-\right) \in \mathbb{R}^2 \mid \tfrac{2}{3} < f_+ < 1, 0<f_-<1 \rbrace$.
\end{enumerate} 
\end{lemma}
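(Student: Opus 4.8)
The plan is to use the standard first-exit-time (barrier) argument. Recall that an open region $U=\{g_1>0,\dots,g_k>0\}\subset\R^2$ is forward-invariant for a (possibly non-autonomous) flow $\dot x = X(t,x)$ provided that at every point where some $g_i$ vanishes while the remaining $g_j$ are $\geq 0$ one has $\langle \nabla g_i, X\rangle>0$: if a trajectory starting in $U$ ever left $U$, then at its first exit time $t^\ast$ some $g_i(x(t^\ast))=0$ with all $g_j(x(t^\ast))\geq 0$, which would force $\tfrac{d}{dt}g_i(x(t))\big|_{t^\ast}\leq 0$, contradicting the strict inequality. So for each of the three sets I would just evaluate the right-hand side of \eqref{eq:g2instode} on the relevant boundary faces and check signs, using throughout that $A,B>0$ on $\R_{>0}$.

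For $H_\pm$ the generic criterion does not apply directly (on $\{f_-=0\}$ one has $\dot f_-=0$), so I would argue instead from the structure of the second equation: $\dot f_- = \tfrac{2f_-}{A}(f_+-1)$ is linear and homogeneous in $f_-$ with continuous coefficient, hence $f_-(t)=f_-(t_0)\exp\!\big(\int_{t_0}^t \tfrac{2(f_+-1)}{A}\,ds\big)$ never changes sign; equivalently $\{f_-=0\}$ is invariant and uniqueness of solutions forbids a sign change. Thus $H_+$ and $H_-$ are invariant. For $\mathcal{R}_\infty=\{f_+>1,\,f_->1\}$: on the face $\{f_+=1,\ f_-\geq 1\}$ one gets $\dot f_+ = \tfrac{A}{B^2}\big(f_-^2-1\big)>0$, and on $\{f_-=1,\ f_+\geq 1\}$ one gets $\dot f_- = \tfrac{2}{A}(f_+-1)>0$, strict except at the corner $(1,1)$. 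For $\mathcal{R}_0=\{\tfrac23<f_+<1,\ 0<f_-<1\}$, the faces $\{f_+=1\}$ and $\{f_-=1\}$ are handled the same way (now $\dot f_+<0$ and $\dot f_-<0$ respectively, pointing inward), and the face $\{f_-=0\}$ is never reached thanks to the forward-invariance of $H_+$ just established.

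The one face that requires a genuine geometric input is $\{f_+=\tfrac23,\ 0\leq f_-\leq 1\}$, where $\dot f_+ = \tfrac{2/3}{A}\big(\tfrac13-\tfrac{A^2}{B^2}\big)+f_-^2\,\tfrac{A}{B^2}$; this is positive precisely because $\tfrac{A^2}{B^2}<\tfrac13$ along the Bryant--Salamon metric. This I would read off from the explicit solution \eqref{eq:g2sols}: with $r=\sqrt3\,B\in(1,\infty)$ one has $\tfrac{A^2}{B^2}=\tfrac13\big(1-r^{-3}\big)$, so $\tfrac13-\tfrac{A^2}{B^2}=\tfrac13 r^{-3}>0$ and $\dot f_+>0$ on this face. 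Finally, the corner $(1,1)\in\partial\mathcal{R}_0\cap\partial\mathcal{R}_\infty$ is a constant solution of \eqref{eq:g2instode} (both right-hand sides vanish there), so by uniqueness no trajectory starting strictly inside either region reaches it in finite time, which disposes of the one point where the inward-pointing inequality degenerates. I expect the only non-routine step to be the verification $A^2/B^2<\tfrac13$ along the Bryant--Salamon metric: this is exactly the fact that forces $\dot f_+>0$ on the lower face of $\mathcal{R}_0$ and hence makes $\mathcal{R}_0$ forward-invariant; everything else is a sign check.
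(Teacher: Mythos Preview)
Your proof is correct and follows essentially the same route as the paper: the paper also reduces to $f_->0$ by noting that $\{f_-=0\}$ carries a one-parameter family of solutions (your exponential-formula argument is an equivalent way to say this), then checks the same sign conditions on the boundary faces of $\mathcal{R}_\infty$ and $\mathcal{R}_0$, uses the same computation $\dot f_+|_{f_+=2/3}=\tfrac{2}{3A}(\tfrac13-\tfrac{A^2}{B^2})+f_-^2\tfrac{A}{B^2}>0$ together with $3A^2<B^2$ from \eqref{eq:g2sols}, and handles the corner $(1,1)$ as a fixed point. Your write-up is slightly more explicit about the first-exit-time framework, but there is no substantive difference.
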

\begin{proof}
 \begin{enumerate}[(i)] 
 \item As previously mentioned, setting $f_- = 0$ gives a family of solutions to \eqref{eq:g2instode}. Hence, by symmetry of Lemma \ref{symmetry:g2instode}, we will reduce to the case $f_->0$ in what follows. 
 \item For $f_- >0$, the sign of $\dot{f}_-$ is given by the sign of $f_+ - 1$, hence a solution cannot leave $\mathcal{R}_\infty$ via the line $f_+>1$, $f_-=1$. Secondly, $\left. \dot{f}_+ \right|_{f_+ =1} =  \tfrac{A}{B^2} \left( f_-^2 -1 \right)$, hence a solution cannot leave via the line $f_+=1$, $f_->1$ either. Finally, the intersection $f_+=f_-=1$ is a critical point of \eqref{eq:g2instode}, corresponding to the flat connection. 
  \item By part (i), we can always assume $f_->0$. Using the same argument as part (ii), we see that $\dot{f}_- < 0 $ when $1>f_+>0$, $\left. \dot{f}_+ \right|_{f_+ =1}<0$ when $1>f_->0$, and $f_+ = f_- = 1$ is a critical point. Thus, it only remains to show a solution cannot leave $\mathcal{R}_0$ via the line segment $f_+=\tfrac{2}{3}$, $f_->0$. This follows from the inequality, $3 A^2 < B^2$ on $t>0$, which can easily be seen from \eqref{eq:g2sols}. With this inequality, it is clear that
\begin{align*}
\left. \dot{f}_+ \right|_{f_+ = \tfrac{2}{3}} =  \tfrac{2}{3 A} \left( \tfrac{1}{3} - \tfrac{A^2}{B^2} \right) +  f_-^2 \tfrac{A}{B^2} > 0.
\end{align*}
 \end{enumerate}
 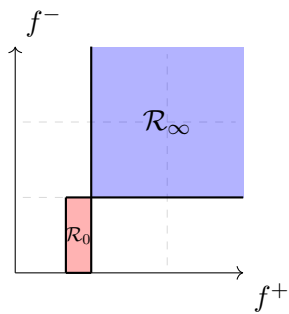
\begin{figure}[h]
\begin{center}
\begin{tikzpicture}[]
\draw[help lines, color=gray!30, dashed] (0.1,0.1) grid (2.9,2.9);
\draw[->] (0,0)--(3,0) node[below right]{$f^+$};
\draw[->] (0,0)--(0,3) node[above right]{$f^-$};

\fill [fill=blue, opacity=0.3] (1,1) rectangle (3,3);

\fill [fill=red, opacity=0.3] (2/3,0) rectangle (1,1);

\draw[thick] (2/3,0) -- (1,0);
\draw[thick] (2/3,0) -- (2/3,1);
\draw[thick] (1,0) -- (1,3);
\draw[thick] (2/3,1) -- (3,1);

\node[scale=0.7] at (0.83,0.5) {$\mathcal{R}_0$};
\node at (2,2) {$\mathcal{R}_\infty$};
\end{tikzpicture}
\end{center}
\caption{The invariant sets $\mathcal{R}_0$ and $\mathcal{R}_\infty$}
\end{figure}
\end{proof}
\begin{lemma}  
A solution $\left(f_+,f_- \right)$ to \eqref{eq:g2instode} lying in $\mathcal{R}_\infty$ at some initial time $t_0>0$, cannot be uniformly bounded for all $t \geq t_0$.
\end{lemma}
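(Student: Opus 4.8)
The plan is to argue by contradiction, using only forward-invariance of $\mathcal{R}_\infty$ and the crude asymptotics of $A,B$. Suppose the solution is defined and satisfies $|f_+|,|f_-|\le C$ on all of $[t_0,\infty)$. By part (ii) of the preceding lemma the trajectory remains in $\mathcal{R}_\infty$, so $f_+>1$ and $f_->1$ throughout; hence $\dot f_-=\tfrac{2f_-}{A}(f_+-1)>0$, so $f_-$ is increasing, and being bounded above it converges to some $f_-^\infty\in(1,C]$. Dividing $\dot f_-=\tfrac{2f_-}{A}(f_+-1)$ by $f_-$ and integrating, the left side stays bounded, so
\[
\int_{t_0}^{\infty}\frac{f_+-1}{A}\,ds<\infty ,
\]
an integral of a positive function. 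This is the only input I will take from the $f_-$-equation.

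The next step is the key algebraic observation: regrouping the $A^{-1}$- and $AB^{-2}$-terms in \eqref{eq:g2instode} rewrites the $f_+$-equation as
\[
\dot f_+=\frac{A}{B^2}\big(f_-^2-1\big)-\frac{A}{B^2}\big(f_+-1\big)-\frac{f_+(f_+-1)}{A},
\]
which exhibits a manifestly positive, non-integrable leading term. Integrating from $t_0$ to $t$, the left side equals $f_+(t)-f_+(t_0)\le C-f_+(t_0)$. On the right, the first integral diverges: $f_-^2-1\ge f_-(t_0)^2-1=:\beta_0>0$ since $f_-$ is increasing with $f_-(t_0)>1$, and $A=\tfrac{t}{3}+O(t^{-2})$, $B=\tfrac{t}{\sqrt3}+O(t^{-2})$ give $\tfrac{A}{B^2}\ge\tfrac{1}{2s}$ for $s$ large, so $\int^{t}\tfrac{A}{B^2}(f_-^2-1)\,ds\to+\infty$. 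The remaining two integrals converge: using $\tfrac{A^2}{B^2}<\tfrac13$ (the inequality $3A^2<B^2$ already quoted) one has $\tfrac{A}{B^2}(f_+-1)=\tfrac{A^2}{B^2}\cdot\tfrac{f_+-1}{A}\le\tfrac{f_+-1}{A}$, and $\tfrac{f_+(f_+-1)}{A}\le C\,\tfrac{f_+-1}{A}$, both controlled by the finite integral above. Letting $t\to\infty$ then forces $f_+(t)-f_+(t_0)\to+\infty$, contradicting $f_+\le C$.

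I expect no real obstacle here: the argument hinges on spotting the rewriting of $\dot f_+$ into terms of definite sign, after which the only quantitative inputs are the leading-order behaviour of $A$ and $B$ (giving $\int^\infty A/B^2\,ds=\infty$) together with the pointwise bounds $\tfrac{A^2}{B^2}<\tfrac13$ and $f_+\le C$, which are exactly what is needed to absorb the two error integrals into $\int^\infty (f_+-1)A^{-1}\,ds$. The only mild bookkeeping is that these estimates hold for $s$ large rather than for all $s$, but the small-$t$ contributions are bounded and do not affect the divergence. Finally, a solution not defined on all of $[t_0,\infty)$ blows up in finite time and so is trivially not uniformly bounded, so it suffices to treat the globally defined case.
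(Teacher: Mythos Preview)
Your argument is correct and is genuinely different from the paper's. The paper reparametrises $t\mapsto e^t$, observes that the resulting system is asymptotic to the autonomous system \eqref{eq:G2odecone0}, and then invokes a result of Markus on asymptotically autonomous systems to reduce to showing that solutions of \eqref{eq:G2odecone0} in $\mathcal{R}_\infty$ cannot stay bounded; this last point is handled by a short monotonicity argument. By contrast, you work directly with the non-autonomous system \eqref{eq:g2instode}: the key step is the algebraic rewriting of $\dot f_+$ into a manifestly non-integrable positive term $\tfrac{A}{B^2}(f_-^2-1)$ plus two error terms, each of which is dominated by $\tfrac{f_+-1}{A}$ and hence controlled by the finite integral obtained from the $f_-$-equation. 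Your route is more elementary and self-contained, avoiding the external reference to Markus entirely; the paper's route has the compensating advantage that the same asymptotically-autonomous framework is reused to prove the companion lemma about convergence in $\mathcal{R}_0$.
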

\begin{proof} Since $f_-$ is strictly increasing in $\mathcal{R}_\infty$, if the solution $\left(f_+,f_- \right)$ blows up at finite time $T$, then necessarily the solution cannot be bounded for all $t<T$. On the other hand, recalling the asymptotic behaviour \eqref{eq:G2odecone} of the system, if we re-parametrise \eqref{eq:g2instode} by $t \mapsto e^t$, then for $t$ sufficiently large and $\left(f_+,f_- \right)$ lying in a compact subset of $\mathcal{R}_\infty$, this re-parametrised system is asymptotic to the autonomous system
\begin{align} \label{eq:G2odecone0}
\dot{f}_+ = 2 f_+ - 3 f_+^2 + f_-^2& &\dot{f}_- = 6 f_-\left( f_+ - 1 \right) 
\end{align}
up to terms decaying exponentially in $t$. The theory of non-autonomous systems asymptotic to autonomous systems can be found in \cite{markusdiffsystem}; here, we apply \cite[Thm.3]{markusdiffsystem}, which says that if solutions to \eqref{eq:G2odecone0} in $\mathcal{R}_\infty$ cannot be uniformly bounded for sufficiently large times, then neither can solutions to \eqref{eq:g2instode}.  

Assume for a contradiction that a solution to \eqref{eq:G2odecone0} exists for all time in $\mathcal{R}_\infty$, and is uniformly bounded. Since $f_-$ is monotonically increasing in $\mathcal{R}_\infty$, there exists an $\epsilon>0$ such that $ f_- > 1 + \epsilon$ for $t>t_0$. If we let $f_+(\epsilon)>1$ be the unique solution to $2 f_+ - 3 f_+^2 + (1+ \epsilon)^2 =0$ in $\mathcal{R}_\infty$, then $f_+$ is strictly increasing in $1<f_+< f_+( \epsilon)$ for time $t>t_0$, and hence $f_+$ is uniformly bounded below away from $1$. But this is a contradiction, since it implies $\dot{f}_-$ is bounded below away from zero, and hence $f_-$ cannot be bounded.  
\end{proof}
\begin{lemma}  
A solution  $\left(f_+,f_- \right)$ to \eqref{eq:g2instode} lying in $\mathcal{R}_0$ at some initial time $t_0>0$ converges to $A^{nK} = \left( \tfrac{2}{3}, 0 \right)$, such that as $t\ra \infty$
\begin{align*}
f_+ = \tfrac{2}{3} + \mu t^{-2} + O(t^{-3}),& &f_- = \nu t^{-2} + O(t^{-3})
\end{align*}
for some $\mu, \nu$ non-zero. 
\end{lemma}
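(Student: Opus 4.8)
The plan is the following. By the forward-invariance of $\mathcal{R}_0$ proved above, a solution with $(f_+(t_0),f_-(t_0))\in\mathcal{R}_0$ stays in the bounded set $\mathcal{R}_0$, so it exists for all $t\ge t_0$ with $\tfrac23<f_+<1$, $0<f_-<1$ throughout; in particular $\dot f_-$ has the sign of $f_+-1<0$, so $f_-$ is strictly decreasing and converges to some $f_-^\infty\in[0,1)$. It then remains to show (a) $(f_+,f_-)\to(\tfrac23,0)=A^{nK}$, and (b) the refined asymptotics \eqref{eq:asymptotic} with both coefficients non-zero.

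For (a) I would reparametrise \eqref{eq:g2instode} by $t=t_0e^{s}$ and use $A=\tfrac t3+O(t^{-2})$, $B=\tfrac{t}{\sqrt3}+O(t^{-2})$ to write the system as $u'=V(u)+R(s)$, where $u=(f_+,f_-)$, $V$ is the vector field of the autonomous cone system \eqref{eq:G2odecone0}, and $|R(s)|=O(e^{-3s})$ since $u$ is bounded. By the theory of asymptotically autonomous systems \cite{markusdiffsystem}, the $\omega$-limit set $L$ of this precompact orbit is a non-empty, compact, $V$-invariant subset of $\overline{\mathcal{R}_0}$. Since $f_-\to f_-^\infty$ we get $L\subseteq\{f_-=f_-^\infty\}$; $V$-invariance then forces $f_-^\infty=0$, for otherwise $\dot f_-=6f_-^\infty(f_+-1)$ would vanish on $L$, giving $L\subseteq\{(1,f_-^\infty)\}$, which is a zero of $V$ only when $f_-^\infty=\pm1$. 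Hence $L$ lies on the segment $[\tfrac23,1]\times\{0\}$, on which $V$ is $\dot f_+=f_+(2-3f_+)<0$; a point of $L$ with $f_+\in(\tfrac23,1]$ would have its backward $V$-orbit leave $\{f_+\le1\}\supseteq\overline{\mathcal{R}_0}$, contradicting invariance. So $L=\{(\tfrac23,0)\}$ and $(f_+,f_-)\to(\tfrac23,0)$.

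For (b), the limit point is a hyperbolic sink of \eqref{eq:G2odecone0} with linearisation $-2\,\mathrm{Id}$; running the argument of Lemma \ref{lemma:decayrate} for the reparametrised full system (whose non-autonomous error $O(e^{-3s})$ is negligible against the spectral gap) gives the a priori rate $f_+=\tfrac23+\mu t^{-2}+O(t^{-3})$, $f_-=\nu t^{-2}+O(t^{-3})$ for constants $\mu,\nu$, and in particular $f_+-\tfrac23, f_-=O(t^{-2})$. The new point is $\mu,\nu\ne0$, where the strict inequalities $f_+>\tfrac23$, $f_->0$ on $\mathcal{R}_0$ are essential. I would work in the variable $r=\sqrt3\,B$, where $\dot r=\sqrt{1-r^{-3}}$ and, by \eqref{eq:g2sols}, $1-A^2/B^2=\tfrac23+\tfrac13r^{-3}$ exactly. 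The $f_-$-equation is linear in $f_-$: $\tfrac{d}{dr}\log f_-=-\tfrac2r+\psi(r)$ with $\psi=O(r^{-3})$ integrable, so $f_-=f_-(r_0)(r/r_0)^{-2}\exp(\int_{r_0}^r\psi)$ and $\nu=f_-(r_0)r_0^2\exp(\int_{r_0}^\infty\psi)>0$ since $f_-(r_0)>0$. For $f_+$, writing $g_+=f_+-\tfrac23>0$, the exact identity yields a fixed-sign forcing term: $\tfrac{d}{dr}(r^2g_+)=\tfrac23r^{-2}+O(r^{-3})$, whence $r^2g_+(r)=\mu-\tfrac23r^{-1}+O(r^{-2})$; since the left side is strictly positive for all large $r$, $\mu=0$ is impossible, so $\mu>0$.

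The main obstacle is exactly the non-vanishing of $\mu$: because $f_+$ does not satisfy a linear equation, $f_+>\tfrac23$ alone is not enough, and one needs the observation that the metric contributes a definite-sign inhomogeneous term (through $1-A^2/B^2=\tfrac23+\tfrac13r^{-3}$) whose particular solution $-\tfrac23r^{-3}$ is negative, so positivity of $g_+$ forces the homogeneous coefficient to be strictly positive. A subsidiary technical step is to make the integral estimates rigorous, for which one first records the a priori bound $f_+-\tfrac23, f_-=O(t^{-2})$ coming from the hyperbolic sink; with this in hand all the remainder terms above are integrable as claimed, and one reads off the asymptotics \eqref{eq:asymptotic} after noting $r=t+O(t^{-2})$.
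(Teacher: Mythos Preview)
Your proof is correct and, for part~(a), follows the same overall strategy as the paper: reparametrise by $t\mapsto e^s$, reduce to the autonomous cone system \eqref{eq:G2odecone0} up to an exponentially decaying error, and invoke \cite{markusdiffsystem}. The difference is tactical: the paper argues by direct $\epsilon$-estimates that $f_+$ is eventually bounded away from $1$, then that $f_-\to0$ and $f_+$ cannot stay away from $\tfrac23$, before appealing to asymptotic stability; you instead use the $\omega$-limit set characterisation from Markus and rule out every point of $\overline{\mathcal{R}_0}$ except $(\tfrac23,0)$ by invariance under the limit flow. Both arguments are short and equally rigorous.

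For part~(b), your proof actually goes further than the paper's. The paper only recovers the asymptotic form \eqref{eq:asymptotic} by citing Lemma~\ref{lemma:decayrate} and the $O(t^{-4})$ discrepancy with the cone equations; it does not separately justify $\mu,\nu\neq0$. Your argument for $\nu\neq0$ via the linear equation $\tfrac{d}{dr}\log f_-=-\tfrac{2}{r}+\psi(r)$ with $\psi\in L^1$ is clean and uses only $f_->0$. Your argument for $\mu\neq0$ is the more delicate one, and it works: the exact identity $1-A^2/B^2=\tfrac23+\tfrac13 r^{-3}$ produces the definite-sign forcing in $\tfrac{d}{dr}(r^2g_+)=\tfrac{2}{3}r^{-2}+O(r^{-3})$, so that $\mu=0$ would give $r^2g_+(r)=-\tfrac23 r^{-1}+O(r^{-2})<0$ for large $r$, contradicting $g_+>0$ in $\mathcal{R}_0$. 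This is a genuine addition to the paper's treatment. The one place you are slightly informal is the step extracting the a~priori $O(t^{-2})$ decay from the hyperbolic sink with diagonal linearisation $-2\,\mathrm{Id}$ and $O(e^{-3s})$ perturbation; this is standard (variation of parameters or Hartman-type linearisation), and the paper is no more explicit here than you are.
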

\begin{proof} The key to proving this statement will be to show that a solution in $\mathcal{R}_0$ must get arbitrarily close to the critical point $\left( \tfrac{2}{3}, 0 \right)$ of  \eqref{eq:G2odecone0} at \textit{some} forward time. Once we have proved this, we can apply \cite[Thm.2]{markusdiffsystem}; since the linearisation of \eqref{eq:G2odecone0} near $\left( \tfrac{2}{3}, 0 \right)$ has only (real) negative eigenvalues, it is asymptotically stable for \eqref{eq:g2instode}. 

Moreover, recall from \S \ref{section:g2instantonodes} that \eqref{eq:g2instode} differs from the cone equations \eqref{eq:G2odecone} by $O(t^{-4})$ terms, so we recover the asymptotic form of a solution $\left(f_+, f_- \right)$ converging to $\left( \tfrac{2}{3}, 0 \right)$ up to $O(t^{-3})$ using the asymptotic form of a solution on the cone in Lemma \ref{lemma:decayrate}. 

Let $\left( f_+, f_- \right)$ be a solution to the re-parametrisation $t \mapsto e^t$ of \eqref{eq:g2instode} which lies in $\mathcal{R}_0$. Since $f_-$ is strictly decreasing, there must be an $\epsilon \in (0,1)$ such that $f_- < 1-\epsilon$ for all forward time. Then we can take $T(\epsilon)>t_0$ sufficiently large such that $\dot{f}_+ < 2 f_+ - 3 f_+^2 + f_-^2 + \epsilon$ for all $t>T$, and an $f_+(\epsilon)$ sufficiently close to $1$ such that $\dot{f}_+ < 2 f_+ - 3 f_+^2 + \left(1-\epsilon\right)^2 + \epsilon <0$ on $f_+(\epsilon) <f_+ <1$, $t>T$. Hence, we can bound $f_+$ away from $1$ for $t>T$. 

On the other hand, $f_-\left( f_+ - 1 \right)$ cannot be bounded above away from zero, since this would imply that $\dot{f}_-$ would be bounded above away from zero after some sufficiently large time, and hence $f_-$ would be unbounded. Combined with the previous observation, this implies $f_-$ cannot be bounded away from $0$, and hence $f_- \ra 0$ as $t\ra \infty$ since $f_-$ is decreasing. Similarly, $|2 f_+ - 3 f_+^2 + f_-^2|$ cannot be bounded below away from $0$, and hence $f_+$ cannot be bounded away from $\tfrac{2}{3}$, and we are done. 
\end{proof}

We finally consider the local power-series solutions $T'_{\gamma'}=\left(f_+, f_- \right)_{\gamma'}$ of \eqref{eq:Pidlocalsol}. The solution with $\gamma'=0$ is the explicit solution \eqref{eq:goncalog2}, and one can take $\gamma' > 0$ otherwise, up to the symmetry of Lemma \ref{symmetry:g2instode}. Then $\left(f_+, f_- \right)_{\gamma'} \in \mathcal{R}_0$ when $0<\gamma'<1$, $\left(f_+, f_- \right)_{\gamma'} \in \mathcal{R}_\infty$ when $1<\gamma'$, and $T'_{\gamma'}$ with $\gamma'=1$ is the critical point $(1,1)$ of \eqref{eq:g2instode} corresponding to the flat connection. This completes the proof of Theorem \ref{thm:g2inst}

\begin{remark} We can understand the limits $\gamma \ra 0$, $\gamma' \ra \pm 1$, as the curvature of the connections $T_\gamma$, $T'_{\gamma'}$ vanish, in terms of $G_2$-instantons on the asymptotic cone. In terms of solutions to the ODE system \eqref{eq:g2instode}, close to this limit, the trajectories $\lbrace \left( f_+, f_- \right)(t) \mid t\in \R_{\geq 0} \rbrace$ of $T_\gamma$, $T'_{\gamma'}$ are modelled on instantons on the cone after some sufficiently large time. 

These limiting solutions of \eqref{eq:G2odecone} are straight line segments in the plane, interpolating between the flat connections $\left(0,0\right), \left(1,1\right), \left(1,-1\right)$ as  $t\ra 0$ and the nearly-K\"{a}hler instanton $A^{nK}:=(\tfrac{2}{3},0)$ as $t\ra \infty$, and are given explicitly given by
\begin{align*}
f_+ = \tfrac{2t^2}{1+3t^2},& &f_-=0
\end{align*}
and its image under the symmetries of Lemma \ref{lemma:conesymmetries}. 
\end{remark} 
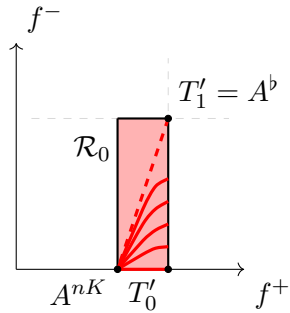
\begin{figure}[h]
\begin{center}
\begin{tikzpicture}[scale=2]
\draw[help lines, color=gray!30, dashed] (0.1,0.1) grid (1.4,1.4);
\draw[->](0,0)--(1.5,0) node[below right]{$f^+$};
\draw[->](0,0)--(0,1.5) node[above right]{$f^-$};

\fill[fill=red, opacity=0.3] (2/3,0) rectangle (1,1);

\draw[thick] (2/3,0) -- (1,0);
\draw[thick] (2/3,0) -- (2/3,1);
\draw[thick] (1,0) -- (1,1);
\draw[thick] (2/3,1) -- (1,1);

\node at (0.5,0.8) {$\mathcal{R}_0$};

\draw[very thick,red] (1,0) -- (2/3,0);
\draw[very thick, dashed,red] (1,1) -- (2/3,0);
\draw[very thick,red] plot [smooth] coordinates {(1,0.6) (11/12,0.55) (5/6,0.4) (2/3,0)};
\draw[very thick,red] plot [smooth] coordinates {(1,0.45) (11/12,0.4) (5/6,0.3) (2/3,0)};
\draw[very thick,red] plot [smooth] coordinates {(1,0.3) (11/12,0.26) (5/6,0.19) (2/3,0)};
\draw[very thick,red] plot [smooth] coordinates {(1,0.15) (11/12,0.14) (5/6,0.10) (2/3,0)};
\filldraw[black] (2/3,0) circle (0.6pt) node[below left]{$A^{nK}$};
\filldraw[black] (1,1) circle (0.6pt) node[above right]{$T'_1=A^{\flat}$};
\filldraw[black] (1,0) circle (0.6pt) node[below left]{$T'_0$};
\end{tikzpicture}
\end{center}
\caption{The trajectories of the family of solutions $T'_{\gamma'}$, $\gamma' \in \left[0,1\right]$. The limiting trajectory is the straight line interpolating between $A^\flat$ and $A^{nK}$.}
\end{figure}
\section{Uniqueness of Unobstructed Instantons} \label{section:deform} 

In the previous section, we classified $\SU(2)^3$-invariant solutions to the $G_2$-instanton equations, giving two families asymptotic to the non-trivial invariant nearly-K\"{a}hler instanton $A^{nK}$ on $S^3 \times S^3$. One might then hope to produce more examples of $G_2$-instantons on the Bryant-Salamon metric by considering deformations of these symmetric solutions away from the symmetric regime. 

However, using the deformation theory of $G_2$-instantons on asymptotically conical $G_2$-manifolds worked out in \cite{driscollthesis}, we will find that these invariant families actually classify all $G_2$-instantons on $\mathbf{S}(S^3)$ asymptotic to $A^{nK}$, at least if their deformations are unobstructed. This essentially follows ideas from \cite{driscollthesis}, but for completeness, we will first briefly recount the required theory, following \cite{driscollthesis} and \cite{nakajima}. 
\subsection{Deformation Theory}
Let $\left(M^7, \varphi \right)$ be an AC $G_2$-manifold, with asymptotic cone $C(\Sigma)$, and $P\ra M$ be a principal $G$-bundle with $G$ compact, semi-simple. Extending the radial parameter on $C(\Sigma) \cong \R_{>0} \times \Sigma$ to a smooth positive function $t$ on $M$, we define the weighted norms for smooth compactly-supported adjoint-valued $p$-forms $\Phi \in \Omega_c^p \left( \mathrm{ad} P \right)$:
\begin{align*}
|| \Phi ||_{W^{k,2}_\mu}: = \left( \sum^k_{j=0} \int_M \left| t^{j-\mu} \nabla_A^j \Phi \right|^2 t^{-7} \right)^{\tfrac{1}{2}}& &|| \Phi ||_{C^k_\mu} := \sum^k_{j=0} \sup_M \left| t^{j-\mu} \nabla_A^j \Phi \right| 
\end{align*} 
for some $\mu<0$, and a fixed connection $A$ on $P$. 

We will use $\Omega^p_{k,\mu} \left( \mathrm{ad} P \right)$ to denote the completion of $\Omega_c^p \left( \mathrm{ad} P \right)$ with respect to the weighted Sobolev norm $W^{k,2}_\mu$, and define
\begin{align*}
\Omega^p_\mu \left( \mathrm{ad} P \right):= \cap_{k\geq 0} \Omega^p_{k,\mu} \left( \mathrm{ad} P \right)
\end{align*}
A weighted version of the standard Sobolev embedding in dimension seven \cite[Thm.2.5.5]{driscollthesis} can be used to show that $\Phi \in \Omega_\mu^p \left( \mathrm{ad} P \right)$ implies that $|| \Phi ||_{C^k_\mu}< \infty$ for all $k\geq 0$, i.e. $| \nabla_A^j \Phi | = O\left(t^{\mu-j} \right)$. 

To consider the space of connections on $P$ with fixed asymptotic behaviour, we fix a \textit{framing at infinity}: a pair $\left(P_\infty, A_\infty \right)$ consisting of a bundle $P_\infty \ra \Sigma$ equipped with a connection $A_\infty$, such that $P \ra M$ is identified with $P_\infty$ pulled back over the conical end of $M$. We will define a connection $A$ on $P$ as \textit{asymptotic to} $A_\infty$ \textit{at polynomial rate} $\mu<0$ if $||A- A_\infty||< \infty$ for all $k\geq 0$, where we pull back $A_\infty$ to the end of $M$ and use the ${W^{k,2}_\mu}$-norm defined using the covariant derivative associated to $A_\infty$. 

The relevant space of connections we will consider is the affine space $\mathcal{A}_{\mu -1}$, $\mu<0$ of all connections asymptotic to $A_\infty$ with polynomial rate strictly less than $-1$. The correct notion of gauge equivalence of two connections in $\mathcal{A}_{\mu -1}$ is to use the subgroup $\mathcal{G}_{\mu}$ of \textit{framed gauge transformations with weight} $\mu$: gauge transformations of $P$ which are asymptotic to the identity on $P_\infty$ at rate $\mu$, see \cite{nakajima}, \cite{driscollthesis} for precise details of how to set-up these weights. The property of the gauge group $\mathcal{G}_\mu$ we will use here is that the tangent space to the $\mathcal{G}_\mu$-orbit through some $A \in  \mathcal{A}_{\mu -1}$ is spanned by elements of the form $d_A \Phi$ for some $\Phi \in \Omega_{\mu}^0$.

Consider the following the \textit{deformation space} $H^1_{\mu-1} (A)$ of the $G_2$-instanton equations at $A$:  
\begin{align*}
H^1_{\mu-1}\left(A\right) := \frac{\ker \left( * \left( * \varphi \wedge d_A  \cdot \right) :  \Omega^1_{\mu-1}\left( \mathrm{ad} P \right) \ra  \Omega^1_{\mu-2} \left( \mathrm{ad} P \right)\right)}{ \mathrm{im} \left( d_A: \Omega^0_{\mu} \left( \mathrm{ad} P \right) \ra  \Omega^1_{\mu-1} \left( \mathrm{ad} P \right)\right)}
\end{align*}
defined as the space of solutions to the linearised instanton equations $* \varphi \wedge d_A a = 0$, modulo linearised gauge transformations $d_A \Phi$ for some $\Phi \in \Omega_{\mu}^0$. 

We can also describe this space as the kernel of an elliptic operator, by fixing a choice of gauge. After this gauge-fixing, the deformation space can be identified with the kernel the Dirac operator \cite[Theorem 4.2.12]{driscollthesis} for weights $-5<\mu<0$: 
 \begin{align*}
D_A := \begin{pmatrix} 0 & d_A^* \\
d_A & * \left( * \varphi \wedge d_A \cdot\right) \\ 
\end{pmatrix} : \Omega^0_{\mu-1} \left( \mathrm{ad} P \right) \oplus  \Omega^1_{\mu-1} \left( \mathrm{ad} P \right)\ra \Omega^0_{\mu-2} \left( \mathrm{ad} P \right) \oplus  \Omega^1_{\mu-2} \left( \mathrm{ad} P \right) 
\end{align*}
Moreover, outside of some discrete set of critical weights depending only on $\left(P_\infty, A_\infty \right)$ and the geometry of asymptotic cone, $D_A$ is Fredholm, thus it has a well-defined index $\dim \mathrm{ker} D_A - \dim \mathrm{coker} D_A$. 

In suitably nice cases, one might hope that any solution of the gauge-fixed linearised equations $D_A \left(\Phi, a \right)= 0 $ can be integrated to find a solution of the full system \eqref{eq:g2instanton0}. If $D_A$ is surjective, then this holds in general by the implicit function theorem: we define $A$ to be \textit{obstructed} if this fails, i.e. if $D_A$ has a non-trivial co-kernel. 

With this general picture understood, let us return to the Bryant-Salamon metric on $\mathbf{S}(S^3)$. Any principal bundle $P \ra \mathbf{S}(S^3)$ must be trivial for gauge group $\SU(2)$, and we fix an asymptotic framing by the homogeneous bundle $P_\infty=\SU(2)^3\times_{\Delta \SU(2)} \SU(2)$ over $S^3 \times S^3$, where $\Delta \SU(2)$ acts on the gauge group via the identity map. Recall that the $\SU(2)^3$-invariant canonical connection associated to this homogeneous bundle is the nearly-K\"{a}hler instanton $A^{nK}$ considered in \S \ref{section:g2instantonodes}.

Consider the Dirac operator $D_A: \Omega^0_{\mu-1} \left( \mathrm{ad} P \right) \oplus  \Omega^1_{\mu-1} \left( \mathrm{ad} P \right) \ra \Omega^0_{\mu-2} \left( \mathrm{ad} P \right) \oplus  \Omega^1_{\mu-2} \left( \mathrm{ad} P \right)$, associated to a $G_2$-instanton $A \in \mathcal{A}_{\mu-1}$ on $\mathbf{S}(S^3)$ asymptotic to $A^{nK}$. By \cite[Thm.6.5.5]{driscollthesis}, this operator is Fredholm with index $\mathrm{ind} D_A =1$ for weights $\mu$ between $\mu \in \left( -2, -0 \right)$, and below the critical weight $\mu = -2$ the index is negative. In particular, the deformation theory is always obstructed below this critical weight, and the index matches the dimension for our invariant solutions in Theorem \ref{thm:g2inst}. 

\begin{prop}[\cite{driscollthesis}] \label{prop:driscoll} Let $P$ be the trivial bundle over the Bryant-Salamon $\mathbf{S}(S^3)$, framed at infinity by the non-trivial homogeneous bundle $P_\infty=\SU(2)^3\times_{\Delta \SU(2)} \SU(2)$ as above. Let $D_A$ be the Dirac operator associated to a $G_2$-instanton $A \in \mathcal{A}_{\mu-1}$ asymptotic to the nearly-K\"{a}hler instanton $A^{nK}$. The index of $D_A$ is $1$ for $\mu \in \left( -2, -0 \right)$, and $-1$ for $ \mu \in \left( -4, -2 \right)$.  
\end{prop}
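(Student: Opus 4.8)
This is \cite[Thm.~6.5.5]{driscollthesis}; the plan is to recover it from the weighted Fredholm theory for asymptotically conical (AC) manifolds together with one representation-theoretic computation on the link. The first point is that $\mathrm{ind}\, D_A$ does not depend on the particular $G_2$-instanton $A \in \mathcal{A}_{\mu-1}$ chosen asymptotic to $A^{nK}$: if $A' = A + a$ with $a \in \Omega^1_{\mu-1}(\mathrm{ad} P)$ and $\mu - 1 < -1$, then $D_{A'} - D_A$ is a zeroth-order operator whose coefficients, together with all derivatives, decay like $t^{\mu-1}$, and since $\mu - 1 < 0$ this is a relatively compact perturbation; hence the two operators have the same index. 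So $\mathrm{ind}\, D_A$ depends only on $\mu$, on the fixed topology of $M \cong S^3 \times \R^4$ and of the (trivial) bundle $P$, and on the asymptotic model $(P_\infty, A^{nK})$ over the link $\Sigma = S^3 \times S^3$, and I may compute it for any convenient representative.

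Next I would invoke Lockhart--McOwen theory in the AC setting, as set up in \cite{driscollthesis}: $D_A$ is Fredholm precisely for $\mu$ outside a discrete set of critical weights, the map $\mu \mapsto \mathrm{ind}\, D_A(\mu)$ is constant on each complementary interval, and across a critical weight $\mu_0$ the index changes by $\dim \mathcal{I}(\mu_0)$, where $\mathcal{I}(\mu_0)$ is the space of homogeneous degree-$\mu_0$ solutions of the model operator $D_{A^{nK}}$ on the metric cone $C(\Sigma)$ with its conical $G_2$-structure $\varphi_C$. Separating the cone variable, $D_{A^{nK}}$ is conjugate, via a power of $t$, to an operator of the form $\partial_t + t^{-1}\mathcal{B}$, where $\mathcal{B}$ is a first-order self-adjoint operator on $\Sigma$ built from the nearly-K\"ahler $\SU(3)$-structure and from the canonical connection $A^{nK}$ on $P_\infty = \SU(2)^3 \times_{\Delta \SU(2)} \SU(2)$; the critical weights then correspond to the eigenvalues of $\mathcal{B}$, with $\dim \mathcal{I}(\mu_0)$ the associated multiplicity.

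I would then pin the absolute index down by duality. Write $I(\mu)$ for the index of $D_A$ acting $\Omega^0_{\mu-1}(\mathrm{ad} P) \oplus \Omega^1_{\mu-1}(\mathrm{ad} P) \to \Omega^0_{\mu-2}(\mathrm{ad} P) \oplus \Omega^1_{\mu-2}(\mathrm{ad} P)$. Since $D_A$ is formally self-adjoint and $\dim M = 7$, the $L^2$-pairing of weighted spaces on an AC $7$-manifold — perfect between weights $\nu$ and $-7-\nu$ — identifies the cokernel of $D_A$ on $\Omega^\bullet_{\mu-1}(\mathrm{ad} P)$ with the kernel of $D_A$ on $\Omega^\bullet_{-5-\mu}(\mathrm{ad} P)$, so $I(\mu) = -I(-4-\mu)$. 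The reflection $\mu \mapsto -4-\mu$ fixes $\mu = -2$ and exchanges the intervals $(-4,-2)$ and $(-2,0)$, so it is enough to show that $(-4,0)$ contains exactly one critical weight, namely $\mu_0 = -2$, with $\dim \mathcal{I}(-2) = 2$. Combining $I|_{(-2,0)} - I|_{(-4,-2)} = 2$ with $I|_{(-4,-2)} = -I|_{(-2,0)}$ then forces $I = 1$ on $(-2,0)$ and $I = -1$ on $(-4,-2)$, which is the claim.

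The one genuinely substantial step — and the place I expect the main difficulty — is the spectral analysis of $\mathcal{B}$ on $(S^3 \times S^3, A^{nK})$: one decomposes $L^2\bigl(\Sigma; (\Lambda^0 \oplus \Lambda^1) \otimes \mathrm{ad} P_\infty\bigr)$ into $\SU(2)^3$-isotypic summands by Frobenius reciprocity for $\Sigma = \SU(2)^3 / \Delta \SU(2)$, with $\mathrm{ad} P_\infty$ associated to the adjoint representation of $\Delta \SU(2)$, and diagonalises $\mathcal{B}$ block by block using $\SU(2)$-representation theory. The computation of \cite{driscollthesis} shows that no eigenvalue of $\mathcal{B}$ yields a critical weight strictly between $-4$ and $0$ other than $\mu_0 = -2$, where $\mathcal{I}(-2)$ is two-dimensional; moreover $\mathcal{I}(-2)$ can be identified with the rate-$(-2)$ $\SU(2)^3$-invariant deformations of Lemma~\ref{lemma:decayrate}, consistent with the one-parameter family $T'_{\gamma'}$ of Theorem~\ref{thm:g2inst} accounting for the index $1$. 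Everything outside this spectral bookkeeping is routine weighted analysis; alternatively, one may simply cite \cite[Thm.~6.5.5]{driscollthesis} verbatim.
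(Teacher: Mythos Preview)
The paper does not prove this proposition at all: it is stated with attribution to \cite{driscollthesis} and used as a black box in the proof of Proposition~\ref{prop:deform}. Your closing remark --- that one may simply cite \cite[Thm.~6.5.5]{driscollthesis} verbatim --- is exactly what the paper does.

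That said, your sketch is a reasonable outline of how such a result is established, and the duality shortcut $I(\mu) = -I(-4-\mu)$ is a clean way to pin down the absolute index once the wall-crossing at $\mu_0=-2$ is known. One caution: you assert that $\dim\mathcal{I}(-2)=2$ and identify $\mathcal{I}(-2)$ with the $\SU(2)^3$-invariant cone solutions of Lemma~\ref{lemma:decayrate}, but Lemma~\ref{lemma:decayrate} only exhibits a two-dimensional \emph{invariant} family; ruling out non-invariant homogeneous solutions at rate $-2$, and showing there are no other critical weights in $(-4,0)$, genuinely requires the full Peter--Weyl decomposition on $\SU(2)^3/\Delta\SU(2)$ that you defer to \cite{driscollthesis}. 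So the argument is not self-contained without that input, but you flag this correctly.
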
 

 \subsection{Symmetries}
We now consider the role of symmetries in this set-up. As before, let $\left(M,\varphi\right)$ be an AC $G_2$-manifold with asymptotic cone $C(\Sigma)$, and let $P \ra M$ be a principal $G$-bundle with compact Lie group $G$, framed at infinity by a $G$-bundle $P_\infty$ with a connection $A_\infty$. If $\varphi$ is invariant under a diffeomorphism $\sigma$ of $M$, we want to obtain a general criteria for understanding when the pulling back a $G_2$-instanton $A$ via some lift of $\sigma$ to the total space of $P$ is a gauge transformation of $A$. 

We can understand this at the infinitesimal level: denote by $\mathfrak{aut}(M,\varphi)$ the Lie-algebra of vector-fields on $M$ fixing the $G_2$-structure, and $\mathfrak{aut}(P_\infty,A_\infty)$ the Lie-algebra of vector fields on $P_\infty$ fixing the connection $A_\infty$. 

Suppose we have a Lie sub-algebra $\mathfrak{k} \subset \mathfrak{aut}(M,\varphi)$ of vector-fields which restrict to vector-fields pulled back from $\Sigma$ along the end, and we are given a lift $X \mapsto X_\infty$ of $\mathfrak{k}$ to $P_\infty$ for which $A_\infty$ is invariant, i.e. a Lie-algebra homomorphism $\mathfrak{k} \ra \mathfrak{aut}(P_\infty,A_\infty)$. Moreover, assume there exists an extension of this lift to the interior i.e. a lift $X \mapsto \tilde{X}_\infty$ to a vector-field on the total space of $P$, such that the vertical vector-field $\tilde{X}_\infty - X_\infty$, viewed here as a section of the adjoint bundle, lies in $\Omega^0_{\mu} \left( \mathrm{ad} P \right)$\footnote{note that such an extension always exists if $P$ admits a $K$-invariant connection asymptotic to $A_\infty$ with rate $\mu-1$.}. 

In this set-up, we prove the following lemma:  

\begin{lemma} \label{lemma:deformationmap} If $A\in \mathcal{A}_{\mu-1}$ is a $G_2$-instanton on $P$   asymptotic to $A_\infty$, then there is well-defined linear map:
\begin{align} \label{eq:deformationmap}
L:\mathfrak{k} \ra H^1_{\mu-1}\left(A\right)& &L:X \mapsto \left[\mathcal{L}_{\tilde{X}_\infty} A\right] 
\end{align} 
Moreover, $\ker{L} \subset \mathfrak{k}$ is a Lie-sub-algebra. 
\end{lemma}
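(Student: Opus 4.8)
The plan is to verify two things: that the map $L$ is well-defined (lands in $H^1_{\mu-1}(A)$ and is independent of the choice of extension $\tilde{X}_\infty$), and that its kernel is a Lie subalgebra of $\mathfrak{k}$.

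For well-definedness, first I would check that $\mathcal{L}_{\tilde{X}_\infty} A$ is an element of $\Omega^1_{\mu-1}(\mathrm{ad}\,P)$: since $A$ is a $G_2$-instanton asymptotic to $A_\infty$ with rate $\mu-1$ and the vertical part $\tilde{X}_\infty - X_\infty$ lies in $\Omega^0_\mu(\mathrm{ad}\,P)$, the Lie derivative $\mathcal{L}_{\tilde{X}_\infty} A$ differs from $\mathcal{L}_{X_\infty} A_\infty = 0$ (by invariance of $A_\infty$ under the chosen lift) by terms of the right weight; one uses $\mathcal{L}_{\tilde{X}_\infty} A = d_A(\tilde{X}_\infty - X_\infty) + (\text{terms controlled by } A - A_\infty)$, more precisely the Cartan-type identity $\mathcal{L}_{\tilde{X}} A = d_A(\iota_{\tilde{X}} A) + \iota_{\tilde{X}} F_A$ interpreted equivariantly, so that the vertical component of $\tilde{X}_\infty$ contributes a $d_A\Phi$ term with $\Phi \in \Omega^0_\mu$ and the horizontal part contributes $\iota_X F_A$, which decays since $F_A$ does. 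Next, because $\mathcal{L}_{\tilde{X}_\infty}$ commutes with the exterior derivative and preserves the $G_2$-structure $\varphi$ (as $\sigma$ fixes $\varphi$ and the lift covers $\sigma$), differentiating the instanton equation $*\varphi \wedge d_A a = 0$ linearized at $A$ shows $\mathcal{L}_{\tilde{X}_\infty} A$ is a solution of the linearized equations, i.e. lies in the kernel of $*(*\varphi \wedge d_A \,\cdot\,)$. Finally, if $\tilde{X}'_\infty$ is another admissible extension, then $\tilde{X}_\infty - \tilde{X}'_\infty$ is vertical and lies in $\Omega^0_\mu(\mathrm{ad}\,P)$, so $\mathcal{L}_{\tilde{X}_\infty} A - \mathcal{L}_{\tilde{X}'_\infty} A = d_A\Psi$ for $\Psi \in \Omega^0_\mu(\mathrm{ad}\,P)$, which is exactly a linearized gauge transformation; hence the class $[\mathcal{L}_{\tilde{X}_\infty} A] \in H^1_{\mu-1}(A)$ is unchanged, and linearity in $X$ is clear.

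For the kernel being a Lie subalgebra: $X \in \ker L$ means $\mathcal{L}_{\tilde{X}_\infty} A = d_A \Phi_X$ for some $\Phi_X \in \Omega^0_\mu(\mathrm{ad}\,P)$, i.e. the lift $\tilde{X}_\infty$ can be corrected by the infinitesimal gauge transformation $-\Phi_X$ to a genuine symmetry $\tilde{X}_\infty - \Phi_X$ of the pair $(P, A)$ (still projecting to $X$ on $M$). The vector fields on the total space of $P$ preserving the connection $A$ and projecting to $\mathfrak{k}$ form a Lie subalgebra of vector fields on $P$ — call it $\mathfrak{g}_A$ — since the bracket of two such is again of this type by naturality of the Lie derivative ($\mathcal{L}_{[\tilde Y, \tilde Z]} A = \mathcal{L}_{\tilde Y}\mathcal{L}_{\tilde Z} A - \mathcal{L}_{\tilde Z}\mathcal{L}_{\tilde Y} A = 0$) and projecting to $P$ is bracket-compatible. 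The image of $\mathfrak{g}_A$ under projection to $\mathfrak{k}$ is then a Lie subalgebra, and I would argue this image is precisely $\ker L$: a class is zero iff the lift is gauge-equivalent (via a weight-$\mu$ infinitesimal gauge transformation) to an element of $\mathfrak{g}_A$. One must be slightly careful that the correcting gauge transformations stay in the correct weighted space, but this follows because $\Phi_X \in \Omega^0_\mu$ by hypothesis. Then $\ker L$, being the projection of a Lie subalgebra along a Lie algebra homomorphism, is itself a Lie subalgebra.

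The main obstacle I anticipate is the bookkeeping in the well-definedness step — correctly identifying $\mathcal{L}_{\tilde{X}_\infty}A$ as $d_A$ of the vertical component plus a contraction $\iota_X F_A$ of the curvature with the horizontal lift, and checking all pieces lie in the declared weighted spaces $\Omega^1_{\mu-1}$ (rather than merely being bounded), using the decay rates from the AC framing together with the weighted Sobolev embedding quoted earlier. The subtlety that the horizontal contraction $\iota_X F_A$ need not be a gauge-exact term, yet still lies in the kernel of the linearized operator because $X$ preserves $\varphi$, is the conceptual heart; everything else is formal manipulation with Lie derivatives.
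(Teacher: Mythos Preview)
Your proposal is correct and follows essentially the same route as the paper: the Cartan identity $\mathcal{L}_{\tilde{X}} A = \iota_X F_A + d_A(\iota_{\tilde{X}} A)$, the subtraction of $\mathcal{L}_{X_\infty} A_\infty = 0$ to place $\mathcal{L}_{\tilde{X}_\infty} A$ in the correct weighted space, and the identification of $\ker L$ with those $X$ admitting a lift $X_A$ to $\mathfrak{aut}(P,A)$ asymptotic to $X_\infty$, so that closure under bracket is immediate. One caution on your ``more precisely'' elaboration: the bare claim that $\iota_X F_A$ decays because $F_A$ does is not sharp enough, since $|X|$ grows linearly and $|F_A|$ is only $O(t^{-2})$, giving $|\iota_X F_A| = O(t^{-1})$, which is \emph{not} in $\Omega^1_{\mu-1}$ for $\mu<0$; your first formulation (and the paper's) gets the right weight by expanding $\mathcal{L}_{\tilde{X}_\infty} A - \mathcal{L}_{X_\infty} A_\infty$ in $a = A - A_\infty$, so that every surviving term carries a factor of $a$, $d_A a$, or $\tilde{X}_\infty - X_\infty$.
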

\begin{proof}
 To verify that \eqref{eq:deformationmap} is well-defined, we will use the identity 
 \begin{align*}
 \mathcal{L}_{\tilde{X}} A:= d (\tilde{X} \lrcorner A) + \tilde{X} \lrcorner d A  = X \lrcorner F_A + d_A \left( \tilde{X} \lrcorner A \right)
 \end{align*}
for any lift $\tilde{X}$ to $P$ of a vector field $X$ on $M$, where we view the $G$-equivariant map $\tilde{X} \lrcorner A$ from $P$ to the Lie algebra of $G$ as a section of the adjoint bundle. We can show $\mathcal{L}_{X_\infty} A \in \Omega^1_{\mu-1} \left( \mathrm{ad} P \right)$ by restricting to the end of $M$ and setting $a = A - A_\infty$. Then for any $\Phi \in \Omega^0 \left( \mathrm{ad} P \right)$:
 \begin{align*}
 F_{A_\infty} = F_A - d_A a - \left[ a \wedge a \right]& &d_{A_\infty} \Phi = d_A \Phi - \left[ a, \Phi\right]   
 \end{align*}
Since by assumption, $\mathcal{L}_{X_\infty} {A_\infty} = X \lrcorner F_{A_\infty} + d_{A_\infty} \left( X_\infty \lrcorner A_\infty \right) = 0$, we have:  
 \begin{align*}
  \mathcal{L}_{\tilde{X}_\infty} A &= X \lrcorner F_{A_\infty} + X \lrcorner \left( d_A a + \left[ a \wedge a \right] \right) + d_A \left( \tilde{X}_\infty \lrcorner A \right) \\
  &= d_A \left( X \lrcorner a \right) + \left[ a , \left( X_\infty \lrcorner A_\infty \right) \right] + X \lrcorner \left( d_A a + \left[ a \wedge a \right] \right) + d_A \left( \tilde{X}_\infty  - X_\infty \right) 
 \end{align*}
To show that this lies in $\Omega^1_{\mu-1} \left( \mathrm{ad} P \right)$, we note that $X_\infty \lrcorner A_\infty \in \Omega^0 \left( \mathrm{ad} P_\infty \right)$ has constant norm along the end, and $X$ restricts to a vector-field pulled back from $\Sigma$, so $|X|$ grows linearly. Moreover, $a \in \Omega^1_{\mu-1} \left( \mathrm{ad} P \right)$, $\tilde{X}_\infty  - X_\infty \in \Omega^0_{\mu} \left( \mathrm{ad} P \right)$ by assumption, thus  $\mathcal{L}_{\tilde{X}_\infty} A \in \Omega^1_{\mu-1} \left( \mathrm{ad} P \right)$.   

Observe that $L(X)=0$ if and only if there is a unique lift $X \mapsto X_A$ to a vector-field on $P$ such that $\mathcal{L}_{X_A} A = 0$, and the vertical vector-field $ \Phi := X_A - \tilde{X}_\infty$ on $P$ lies in $\Omega^0_{\mu} \left( \mathrm{ad} P \right)$, viewed here as a section of the adjoint bundle. This section $\Phi$ is precisely the one that satisfies $\mathcal{L}_{\tilde{X}_\infty} A = d_A \Phi$, so uniqueness follows from the injectivity of $d_A:\Omega^0_{\mu} \left( \mathrm{ad} P \right) \ra  \Omega^1_{\mu-1} \left( \mathrm{ad} P \right)$  \cite[Cor.4.2.6]{driscollthesis}.

Now, since the lift of $\left[ X,Y \right]_\infty$ can be identified with the commutator $\left[X_\infty,Y_\infty \right]$ on $P_\infty$ for all $X,Y \in \mathfrak{k}$, then it is not hard to see that $\left[ X,Y \right]_A := \left[X_A,Y_A \right]$ also satisfies the two conditions for lifting $\left[ X,Y \right]$ to $P$ if $L(X) = L(Y) =0$, and so $\ker L \subset \mathfrak{k}$ is a Lie sub-algebra.  
\end{proof}

 We will use this observation to prove the following proposition: 

\begin{prop} \label{prop:deform} Any $G_2$-instanton on $\mathbf{S}(S^3)$ asymptotic to $A^{nK}$ with rate $-2<\mu-1<0$ is either obstructed or gauge-equivalent to an instanton in the one of the families $T_\gamma$, $T'_{\gamma'}$.
\end{prop}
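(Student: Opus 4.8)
The plan is to use Lemma \ref{lemma:deformationmap} together with the index computation of Proposition \ref{prop:driscoll} to show that the $\SU(2)^3$-symmetry of $A^{nK}$ must be inherited by any unobstructed $G_2$-instanton asymptotic to it. Let $A$ be a $G_2$-instanton on $\mathbf{S}(S^3)$ asymptotic to $A^{nK}$ with rate $\mu - 1 \in (-2, 0)$, and suppose $A$ is unobstructed, i.e. $D_A$ is surjective. First I would take $\mathfrak{k} = \mathfrak{su}(2)^3 = \mathrm{Lie}(\SU(2)^3)$ acting on $\mathbf{S}(S^3)$; since the Bryant–Salamon $G_2$-structure $\varphi$ is $\SU(2)^3$-invariant and, along the conical end, these vector fields are pulled back from the link $S^3 \times S^3$, the hypotheses of Lemma \ref{lemma:deformationmap} are met — the canonical connection $A^{nK}$ on $P_\infty$ is $\SU(2)^3$-invariant, giving the required lift $X \mapsto X_\infty$, and this lift extends to the interior because $P \to \mathbf{S}(S^3)$ itself admits the $\SU(2)^3$-invariant connections constructed in \S\ref{section:g2instantonodes} asymptotic to $A^{nK}$ at the required rate. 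This produces the linear map $L : \mathfrak{su}(2)^3 \to H^1_{\mu-1}(A)$.

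Next I would bound $\dim H^1_{\mu-1}(A)$. Since $A$ is unobstructed, $D_A$ has trivial cokernel, so $\dim H^1_{\mu-1}(A) = \dim \ker D_A = \mathrm{ind}\, D_A$; by Proposition \ref{prop:driscoll} this index equals $1$ for $\mu \in (-2, 0)$. Therefore $L$ has a kernel of dimension at least $\dim \mathfrak{su}(2)^3 - 1 = 8$. By Lemma \ref{lemma:deformationmap}, $\ker L$ is a Lie subalgebra of $\mathfrak{su}(2)^3$; the only codimension-$\leq 1$ subalgebras of $\mathfrak{su}(2)^3$ (a semisimple Lie algebra has no codimension-one subalgebras, so $\ker L$ has codimension exactly $0$ or is impossible in codimension $1$) force $\ker L = \mathfrak{su}(2)^3$. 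Thus $L \equiv 0$, meaning that for every $X \in \mathfrak{su}(2)^3$ there is a lift $X_A$ to $P$ with $\mathcal{L}_{X_A} A = 0$, and these lifts commute (the bracket argument in the proof of Lemma \ref{lemma:deformationmap}), so they integrate to an action of $\SU(2)^3$ (or a cover thereof, which one argues acts through $\SU(2)^3$ by analysing the isotropy on the singular orbit) on $P$ fixing $A$.

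Once $A$ is genuinely $\SU(2)^3$-invariant, the classification is forced: $A$ is an invariant $G_2$-instanton on $\mathbf{S}(S^3)$ asymptotic to $A^{nK}$, and since $\mu - 1 > -2$ gives in particular bounded $(f_+, f_-)$ and hence quadratic curvature decay, Theorem \ref{thm:g2inst} applies to place $A$ in one of the families $T_\gamma$ or $T'_{\gamma'}$, up to gauge. (One should note that the families $T_\gamma$ with $\gamma > 0$ and $T'_{\gamma'}$ with $|\gamma'| < 1$ converge to $A^{nK}$ at rate exactly $-3 < \mu - 1$, so they genuinely lie in $\mathcal{A}_{\mu-1}$, while the flat members correspond to $A^{nK}$ being the trivial connection in disguise, which is excluded by the framing; this is a consistency check rather than an obstacle.)

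The main obstacle I expect is the passage from the infinitesimal statement $L \equiv 0$ to an honest global $\SU(2)^3$-action on $P$ preserving $A$, and then identifying the invariant instanton obtained this way with one appearing in Proposition \ref{prop:g2instode}. Integrating the commuting lifts $X_A$ requires knowing they are complete and that the resulting local action globalises; completeness should follow because the $X_A$ differ from the (complete) lifts $\tilde X_\infty$ by a section $\Phi \in \Omega^0_\mu(\mathrm{ad} P)$ decaying at infinity, and one controls behaviour near the singular orbit $S^3$ using the smoothness of $A$ there. The globalisation and the possible finite cover both reduce to checking that the stabiliser subgroups match the group diagram $\Delta\SU(2) \subset \Delta_{1,2}\SU(2)\times\SU(2) \subset \SU(2)^3$, which is routine but needs care. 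After that, Wang's theorem and Proposition \ref{prop:g2instode} put $A$ in the ODE form, and Theorem \ref{thm:g2inst} finishes the argument.
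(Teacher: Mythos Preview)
Your proposal is correct and follows essentially the same route as the paper's proof: use Lemma~\ref{lemma:deformationmap} to obtain $L:\mathfrak{su}(2)^3 \to H^1_{\mu-1}(A)$, invoke Proposition~\ref{prop:driscoll} and unobstructedness to make the target one-dimensional, and then use that a semisimple Lie algebra has no codimension-one subalgebras to force $L\equiv 0$. For the integration step you flag as the main obstacle, the paper simply appeals to completeness of the lifted vector fields and the simple-connectedness of $\SU(2)^3$, citing Palais's global Lie theory to pass from the Lie-algebra action to a group action on $P$; this disposes of your cover worry directly.
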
 

\begin{proof} 
We will the computation of Proposition \ref{prop:driscoll} to show that, if an instanton $A \in \mathcal{A}_{\mu-1}$ is not obstructed, then it must be $\SU(2)^3$-invariant, for some lift of the action of $\SU(2)^3$ to $P$ asymptotic to the action of $\SU(2)^3$ on the framing bundle $P_\infty = \SU(2)^3\times_{\Delta \SU(2)} \SU(2) \ra \SU(2)^3 / \Delta \SU(2)$. Once this is proven, the result follows from the existence and uniqueness results of Theorem \ref{thm:g2inst} in the previous section.

So to prove invariance, we note that if $A$ is not obstructed, the deformation space $H^1_{\mu-1}\left(A\right)$ is one-dimensional for $-2<\mu<0$ by \cite[Thm.6.5.5]{driscollthesis}. Since the map $L: \mathfrak{su}(2)^3 \ra H^1_{\mu-1}\left(A\right)$ defined in Lemma \ref{lemma:deformationmap} is linear, then the kernel has co-dimension at most one in $\mathfrak{su}(2)^3$. However, since this kernel is a Lie sub-algebra of $\mathfrak{su}(2)^3$, it cannot have co-dimension one, and so $L$ must vanish on all of $\mathfrak{su}(2)^3$.  

As previously discussed, this implies that we can uniquely lift $\mathfrak{su}(2)^3$ to a Lie-algebra of vector-fields on $P$ fixing $A$, such that these vector-fields are asymptotic to the infinitesimal action of $\SU(2)^3$ on the homogeneous bundle $P_\infty = \SU(2)^3\times_{\Delta \SU(2)} \SU(2) \ra \SU(2)^3/\Delta \SU(2)$. Since these vector-fields are complete, and $\SU(2)^3$ is simply-connected, it follows by \cite[Ch.3 Thm.7, Ch.4 Thm.3]{palais:globallietheory} that these vector-fields integrate to give a unique lift of the $\SU(2)^3$-action to $P$ fixing $A$.  
\end{proof}

\bibliographystyle{alpha}
\bibliography{diffgeo1} 
\end{document}